\documentclass[preprint,1p,10pt]{elsarticle}

\usepackage{lineno,hyperref}
\modulolinenumbers[1]

\usepackage{array}
\newcolumntype{C}[1]{>{\centering\arraybackslash}m{#1}}

\usepackage{graphics}
\usepackage{amssymb, latexsym, amsmath, epsfig}
\usepackage{graphicx}
\usepackage{color}
\usepackage{epstopdf}
\usepackage{soul}
\usepackage[normalem]{ulem}
\setlength{\parindent}{2em}
\usepackage{float}
\usepackage{amsfonts,amstext,amsthm}
\usepackage{epsfig}
\usepackage{subfigure}

\graphicspath{{figures/}}

\newtheorem{theorem}{Theorem}
\newtheorem{lemma}{Lemma}

\newtheorem{coro}{Corollary}
\numberwithin{equation}{section}
\newtheorem{remark}{Remark}
\newcommand{\bfs}[1]{{\boldsymbol #1}}

\biboptions{numbers,sort&compress}

\journal{arXiv}









\bibliographystyle{elsarticle-num}

\begin{document}

\begin{frontmatter}

\title{Dispersion optimized quadratures for isogeometric analysis}

\author[ad,ad1,ad2]{Victor Calo}
\ead{Victor.Calo@curtin.edu.au}

\author[ad,ad1]{Quanling Deng\corref{corr}}
\cortext[corr]{Corresponding author}
\ead{Quanling.Deng@curtin.edu.au}

\author[ad,ad1]{Vladimir Puzyrev}
\ead{Vladimir.Puzyrev@curtin.edu.au}

\address[ad]{Applied Geology, Curtin University, Kent Street, Bentley, Perth, WA 6102, Australia}
\address[ad1]{Curtin Institute for Computation, Curtin University, Kent Street, Bentley, Perth, WA 6102, Australia}
\address[ad2]{Mineral Resources, Commonwealth Scientific and Industrial Research Organisation (CSIRO), Kensington, Perth, WA 6152, Australia}

\begin{abstract}
We develop and analyze quadrature blending schemes that minimize the dispersion error of isogeometric analysis  up to polynomial order seven with maximum continuity in the span. The  schemes yield two extra orders of convergence (superconvergence) on the eigenvalue errors, while the eigenfunction errors are of optimal convergence order. Both dispersion and spectrum analysis are unified in the form of a Taylor expansion for eigenvalue errors. The resulting schemes increase the accuracy and robustness of isogeometric analysis for wave propagation as well as the differential eigenvalue problems. We also derive an a posteriori error estimator for the eigenvalue error based on the superconvergence result. We verify with numerical examples the analysis of the performance of the proposed schemes.

\end{abstract}

\begin{keyword}
isogeometric analysis \sep quadrature rule \sep dispersion analysis \sep spectrum analysis
\end{keyword}

\end{frontmatter}


\section{Introduction} \label{sec:intr} 
Isogeometric analysis is a widely-used numerical method introduced by Hughes and his collaborators \cite{hughes2005isogeometric,cottrell2009isogeometric} in 2005. Approximation, stability, and error estimates are established in \cite{bazilevs2006isogeometric}.  Structural vibrations and wave propagation problems are investigated using isogeometric analysis in \cite{cottrell2006isogeometric}. Spectrum analysis of the method shows that the method significantly improves accuracy in the spectral calculation over the classical finite element method \cite{cottrell2009isogeometric}. Further advantages of the method on spectral approximation properties are investigated in \cite{hughes2014finite}.

A duality principle, which induces a bijective map from spectral analysis to dispersion analysis, is established \cite{hughes2008duality}. The spectral analysis for structural vibrations (eigenvalue problems) and the dispersion analysis for wave propagation are then unified. 
Although the cost per degree of freedom of isogeometric analysis is higher than for finite elements \cite{collier2012cost,collier2013cost,collier2014computational,pardo2012survey}, the dispersion error is dramatically smaller than that of finite elements.
In this paper, we propose a blending scheme for isogeometric analysis which increases the convergence by two orders with respect to the mesh size. With this motivation, under the framework of unified analysis, we study the dispersion optimization of the isogeometric analysis.

Dispersion analysis for the Galerkin finite element methods has been studied actively in literature; see for example Thomson and Pinsky \cite{thompson1994complex,thompson1995galerkin}, Ihlenburg and Babuska \cite{ihlenburg1995dispersion}, Ainsworth \cite{ainsworth2004discrete}, and others \cite{harari1997reducing,harari2000analytical,he2011dispersion}. In \cite{thompson1994complex}, Thomson and Pinsky study the dispersive effects of the Galerkin methods with different local approximation basis (Legendre, spectral, and Fourier elements) for the Helmholtz equation and it is found that the choice of the basis has a negligible effect on the dispersion  errors. In \cite{ainsworth2004discrete}, a $2p$ convergence rate of the dispersion error is shown for arbitrary $p$-th order finite element methods. For the linear case, a fourth order superconvergence result is obtained by a modified integration rule for finite elements in \cite{guddati2004modified}.

As early as 1984, Marfurt \cite{marfurt1984accuracy} conjectured that the most promising and cost-effective method for computational wave propagation is to employ a weighted average of the finite element and spectral element schemes. In 2010, Ainsworth and Wajid \cite{ainsworth2010optimally}  introduced the optimal blending of these two schemes of arbitrary polynomial order. This optimal blending delivers two extra orders of convergence on the dispersion errors. 
In 2016, a dispersion optimized edge-based mimetic finite difference method for Maxwell's equations in cold plasma was developed in \cite{bokil2016dispersion}. The authors presented a generalized form of mass lumping and an optimization procedure to reduce the numerical dispersion error from second to fourth order accuracy.  In \cite{wajid2012optimally}, the authors described a similar result on dispersion by optimally blending finite element and spectral element methods for Maxwell's equations.

To evaluate the bilinear form, in the case of spectral and finite elements, Ainsworth and Wajid pointed out  that spectral elements use the Gauss-Lobatto quadrature rule while finite elements use the Gauss-Legendre quadrature rule \cite{ainsworth2010optimally} on the same polynomial space. The optimal blending of spectral and finite elements is essentially a blending of the Gauss-Legendre and Gauss-Lobatto quadrature rules. Thus, this new blended quadrature rule minimizes the dispersion errors. 

In this paper, we minimize the dispersion error for isogeometric analysis by blending quadrature rules appropriately while the eigenfunction errors do not degenerate. We study optimally-blended quadratures for isogeometric analysis up to polynomial order seven. We minimize the dispersion error  to obtain two additional orders of error convergence.

The rest of this paper is organized as follows. Section \ref{sec:ps} presents the discretization of an eigenvalue problem and its generalized Pythagorean eigenvalue theorem. In Section \ref{sec:ddr}, we present the discrete dispersion relation and derive the optimized dispersion error expansions for isogeometric analysis up to order seven. Dispersion and spectrum analysis are unified in the form of a Taylor expansion for the eigenvalue errors.  Section \ref{sec:ea} analyzes the error of the blending schemes and an a posteriori error estimator based on the superconvergence result of the eigenvalue error is derived. Section \ref{sec:num} presents numerical examples to demonstrate the performance of the proposed blending schemes. Section 6 describes our concluding remarks.

\section{Problem setting} \label{sec:ps}
We consider stationary waves as described by the Helmholtz equation
\begin{equation} \label{eq:pde}
\begin{aligned}
\Delta u +  \omega^2 u & = 0 \quad  \text{in} \quad \Omega, \\
u & = 0 \quad \text{on} \quad \partial \Omega,
\end{aligned}
\end{equation}
where $\Omega = (0, 1)^d, d \in \{1,2,3\}, $ is a unit cube (unit interval when $d=1$ and unit square when $d=2$), $\Delta = \nabla^2$ is the Laplacian and $\omega = \omega_f/c$ with $\omega_f$ being the frequency of a particular sinusoidal wave and $c$ being the speed of sound of the medium. Denoting $\lambda = \omega^2$, in the view of duality and unified analysis of discrete approximations for wave propagation studied in \cite{hughes2008duality}, \eqref{eq:pde} is also posed as a second order elliptic eigenvalue problem: find real values $\lambda $ and nonzero functions $u$ such that 
\begin{equation} \label{eq:pdee}
-\Delta u =  \lambda u \quad  \text{in} \quad \Omega.
\end{equation}

The eigenvalue problem \eqref{eq:pdee} has a countable set of eigenvalues $\lambda_j \in \mathbb{R}^+$ (c.f., \cite[Sec.~9.8]{brezis2010functional} and \cite{gilbarg2015elliptic,babuvska1991eigenvalue,boffi2010finite,strang1973analysis})
\begin{equation}
0 < \lambda_1 < \lambda_2 \leq \lambda_3 \leq \cdots
\end{equation}
and an associated set of orthonormal eigenfunctions $u_j$
\begin{equation} \label{eq:uon}
(u_j, u_k) = \delta_{jk},
\end{equation}
where $(\cdot, \cdot)$ denotes the $L^2$-inner product on $\Omega$. Herein, the Kronecker delta is defined as $\delta_{lm} =1$ when $l=m$ while zero otherwise.

\subsection{Discretization}
For an open bounded set $S \subset \mathbb{R}^d$ with Lipschitz boundary, we denote by $H^m(S)$  the Sobolev spaces and $H^m_0(S)$ the Sobolev spaces with functions vanishing at the boundary, where $m>0$ specifies the order of the weak derivative.
The variational formulation of \eqref{eq:pdee} is to find $\lambda \in \mathbb{R}^{+}$ and $u \in H^1_0(\Omega)$ such that 
\begin{equation} \label{eq:vf}
a(u, v) =  \lambda b(u, v) \quad \forall \ v \in H^1_0(\Omega), 
\end{equation}
where 
$
a(w, v) = (\nabla w, \nabla v)
$ and $
b(w, v) = (w, v)
$. Let $(\lambda_j, u_j)$ be an eigenpair, then $a(u_j, v) =  \lambda_j b(u_j, v)$. For orthonormal eigenfunctions $u_j$ (in the sense of $L^2$-inner product; see \eqref{eq:uon}), they are also orthogonal in the energy inner product
\begin{equation} \label{eq:vfo}
a(u_j, u_k) =  \lambda_j b(u_j, u_k) = \lambda_j \delta_{jk}.
\end{equation}

Let $\prod$ be the product symbol and $N_k$, $k = 1, \cdots, d,$ be positive integers associated with the space variable $x_k$.
Assume that a uniform tensor product mesh with $\prod_{k=1}^d N_k $ elements is placed on $\overline\Omega=[0,1]^d$ with grid nodes located at $(h_1 n_1, \cdots, h_d n_d)$, where $h_k = \frac{1}{N_k}, k=1, \cdots, d,$ is the size of the $k^\text{th}$ dimension and $n_k = 0, 1, \cdots, N_k.$ In case of one dimension, we simplify the notation as $N, h,$ respectively. We denote each element as $K$ and their collection as $\mathcal{T}_h$ such that $\bar\Omega = \cup_{K\in \mathcal{T}_h}  K$. Due to the tensor product structure of the discretization, the element size is $h = \sqrt{ \sum_{k=1}^d h_k^2}$.
The Galerkin-type numerical methods seek $\lambda^h \in \mathbb{R}^+$ and $u^h \in V_h \subset H_0^1(\Omega)$ such that 
\begin{equation} \label{eq:vfh}
a(u^h, v_h) =  \lambda^h b(u^h, v_h) \quad \forall \ v_h \in V_h. 
\end{equation}

Different solution/trial spaces $V_h$ lead to different numerical methods. Under this framework, we consider the following methods: finite element, spectral element, and isogeometric analysis. We utilize the B-spline basis functions for isogeometric analysis. 
Following \cite{de1978practical,cottrell2009isogeometric,piegl2012nurbs}, the definition of the $p$-th order B-spline basis functions in one dimension is as follows. 
Let $\Xi = \{\xi_0, \xi_1, \cdots, \xi_{N_0} \}$ be an ordered knot vector with $0=\xi_0 \le \xi_1 \le \cdots \le \xi_{N_0} = 1$, that is, a nondecreasing sequence of real numbers called knots.  The B-spline basis function of degree $p$, denoted as $B_a^p(\xi)$ (here $a$ refers to an index with slightly abuse of notation $a(\cdot, \cdot)$), is defined as 
\begin{equation} \label{eq:B-spline}
\begin{aligned}
B_a^0(\xi) & = 
\begin{cases}
1, \quad \text{if} \ \xi_a \le \xi < \xi_{a+1}, \\
0, \quad \text{otherwise}, \\
\end{cases} \\ 
B_a^p(\xi) & = \frac{\xi - \xi_a}{\xi_{a+p} - \xi_a} B_a^{p-1}(\xi) + \frac{\xi_{a+p+1} - \xi}{\xi_{a+p+1} - \xi_{a+1}} B_{a+1}^{p-1}(\xi).
\end{aligned}
\end{equation}

In this paper, for isogeometric analysis, we utilize B-splines on uniform tensor product meshes with non-repeating knots, that is, the B-splines with maximum continuity, while for finite element method, we utilize $C^0$ B-spline basis functions. For multiple dimensions, the B-spline basis functions are constructed by tensor products of these one-dimensional B-spline basis functions; we refer to \cite{cottrell2009isogeometric,piegl2012nurbs} for details.
Let $B^p_{n_k}$ be the one-dimensional basis functions in dimension $k=1,\cdots, d$ for $p$-th order finite element and isogeometric analysis. Provided the tensor product structure of the discretization, a $d$-dimensional basis function can be written as $\prod_{k=1}^d B^p_{n_k}$. 
Then $ V_h = \text{span} \{  \prod_{k=1}^d B^p_{n_k} \}_{ n_k = 0,1, \cdots, N_k}$.

In the framework of finite elements, the eigenpairs $(\lambda^h, u^h)$ have the following properties (see for example, \cite{babuvska1991eigenvalue,strang1973analysis, banerjee1989estimation, banerjee1992note})
\begin{equation} \label{eq:femeverr}
| \lambda - \lambda^h | \le C h^{2p} \| u \|^2_{p+1, \Omega} \qquad \text{and} \qquad \| u - u^h \|_{1, \Omega} \le C h^p\| u \|_{p+1, \Omega},
\end{equation}
where $C$ is a constant independent of $h$.

In practice, the integrals involved in $a(u^h, v_h) $ and $b(u^h, v_h)$ are evaluated numerically, that is, approximated by quadrature rules. On a reference element $\hat K$, a quadrature rule is of the form
\begin{equation} \label{eq:qr}
\int_{\hat K} \hat f(\hat{\boldsymbol{x}}) \ \text{d} \hat{\boldsymbol{x}} \approx \sum_{l=1}^{N_q} \hat{\varpi}_l \hat f (\hat{n_l}),
\end{equation}
where $\hat{\varpi}_l$ are the weights, $\hat{n_l}$ are the nodes, and $N_q$ is the number of quadrature points. For each element $K$, we assume that there is an invertible affine map $\sigma$ such that $K = \sigma(\hat K)$, which leads to the correspondence between the functions on $K$ and $\hat K$. Assuming $J_K$ is the corresponding Jacobian of the mapping, \eqref{eq:qr} induces a quadrature rule, denoted as $\mathcal{Q}$, over the element $K$ given by
\begin{equation} \label{eq:q}
\int_{K}  f(\boldsymbol{x}) \ \text{d} \boldsymbol{x} \approx \mathcal{Q}(f) = \sum_{l=1}^{N_q} \varpi_{l,K} f (n_{l,K}),
\end{equation}
where $\varpi_{l,K} = \text{det}(J_K) \hat \varpi_l$ and $n_{l,K} = \sigma(\hat n_l)$.
Let $G_{N_q}$ and $GL_{N_q}$ denote the $N_q$-point Gauss-Legendre quadrature rule and Gauss-Lobatto quadrature rule, respectively. The detailed description of these rules is given in \cite{kythe2004handbook}. We define the blending quadrature rule $\mathcal{Q}_\tau$ as
\begin{equation} \label{eq:bq}
\int_{K}  f(\boldsymbol{x}) \ \text{d} \boldsymbol{x} \approx \mathcal{Q}_\tau (f) = \tau \mathcal{Q}_1 (f) + (1 - \tau ) \mathcal{Q}_2 (f),
\end{equation}
where $\tau$ is the blending parameter and $\mathcal{Q}_1, \mathcal{Q}_2$ are different quadrature rules. In this paper, we seek the optimal blending parameter to minimize the dispersion errors of the isogeometric analysis.
We denote by $O_p$ the optimal blending scheme for the $p$-th order isogeometric analysis.

Applying quadrature rules to \eqref{eq:vfh}, we have the approximated form
\begin{equation} \label{eq:vfho}
\tilde a_h(\tilde u^h, v_h) =  \tilde\lambda^h \tilde b_h(\tilde u^h, v_h) \quad \forall \ v_h \in V_h,
\end{equation}
where 
\begin{equation} \label{eq:ba}
\tilde a_h(w, v) = \sum_{K \in \mathcal{T}_h} \sum_{l=1}^{N_q} \varpi_{l,K}^{(1)} \nabla w (n_{l,K}^{(1)} ) \cdot \nabla v (n_{l,K}^{(1)} )
\end{equation}
and
\begin{equation} \label{eq:bb}
\tilde b_h(w, v) = \sum_{K \in \mathcal{T}_h} \sum_{l=1}^{N_q} \varpi_{l,K}^{(2)} w (n_{l,K}^{(2)} ) v (n_{l,K}^{(2)} ),
\end{equation}
where $\{\varpi_{l,K}^{(1)}, n_{l,K}^{(1)} \}$ and $\{\varpi_{l,K}^{(2)}, n_{l,K}^{(2)} \}$ specify two (possibly different) quadrature rules. 

We represent the eigenfunctions as a linear combination of the B-spline (or Lagrange) basis functions and substitute all the basis functions for $V_h$ in \eqref{eq:vfho}. Applying quadrature rules, this leads to the generalized matrix eigenvalue problem
\begin{equation} \label{eq:mevp}
\mathbf{K} \tilde{\mathbf{U}} = \tilde \lambda^h \mathbf{M} \tilde{\mathbf{U}},
\end{equation}
where $\mathbf{K}_{ab} = \tilde a_h(B_a, B_b), \mathbf{M}_{ab} = \tilde b_h(B_a, B_b)$ with $B_a$ and $B_b$ being generic basis functions, and $\tilde{\mathbf{U}}$ is the corresponding representation of the eigenvector as the coefficients of the basis functions. 
Once the matrix eigenvalue problem \eqref{eq:mevp} is solved, we obtain eigenpairs.
Throughout this paper, we refer to $(\lambda_j, u_j)$ as one exact eigenpair, $(\lambda_j^h, u_j^h)$ as one approximate eigenpair when the inner products are not modified by  the numerical quadrature, and $(\tilde \lambda_j^h, \tilde u_j^h)$ one approximate eigenpair when modified by the numerical quadrature.

\subsection{Pythagorean eigenvalue theorem and its generalization} \label{sec:petg}
Denoting the energy norm as $\| \cdot \|_{E} = \sqrt{a(\cdot, \cdot)}$, the Pythagorean eigenvalue theorem \cite{strang1973analysis,hughes2014finite} states 
\begin{lemma}\label{lem:pet}
For each discrete mode, with the normalization $\| u_j \|_{0, \Omega} = 1$ and $ \| u_j^h \|_{0, \Omega} = 1$, there holds
\begin{equation} \label{eq:pet0}
\| u_j - u_j^h \|_E^2 = \lambda_j \| u_j - u_j^h \|^2_{0, \Omega} + \lambda_j^h - \lambda_j.
\end{equation}
\end{lemma}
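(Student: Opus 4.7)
The plan is to expand the squared energy norm on the left using bilinearity of $a(\cdot,\cdot)$ and then reduce each of the three resulting terms using the variational characterizations of the continuous and discrete eigenpairs together with the prescribed $L^2$ normalizations. The two endpoint terms $a(u_j,u_j)$ and $a(u_j^h,u_j^h)$ are handled immediately by testing \eqref{eq:vf} and \eqref{eq:vfh} against $u_j$ and $u_j^h$ respectively, which together with $\|u_j\|_{0,\Omega}=\|u_j^h\|_{0,\Omega}=1$ yield $a(u_j,u_j)=\lambda_j$ and $a(u_j^h,u_j^h)=\lambda_j^h$.

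The cross term $a(u_j,u_j^h)$ is the step that actually uses the conformity of the method. Since $u_j^h\in V_h\subset H_0^1(\Omega)$, it is an admissible test function in the continuous variational formulation \eqref{eq:vf}, and so $a(u_j,u_j^h)=\lambda_j\,b(u_j,u_j^h)=\lambda_j\,(u_j,u_j^h)$. Substituting these three identities, I obtain
\begin{equation*}
\|u_j-u_j^h\|_E^2 \;=\; \lambda_j - 2\lambda_j (u_j,u_j^h) + \lambda_j^h .
\end{equation*}

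I would then expand the $L^2$ side of the claimed identity in the same way, using $\|u_j\|_{0,\Omega}^2=\|u_j^h\|_{0,\Omega}^2=1$ to get
\begin{equation*}
\lambda_j\|u_j-u_j^h\|_{0,\Omega}^2 + \lambda_j^h - \lambda_j \;=\; \lambda_j\bigl(2 - 2(u_j,u_j^h)\bigr) + \lambda_j^h - \lambda_j \;=\; \lambda_j - 2\lambda_j(u_j,u_j^h) + \lambda_j^h,
\end{equation*}
which matches the expression obtained for $\|u_j-u_j^h\|_E^2$, completing the identity.

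There is no real obstacle here; the only subtlety is noticing that the cross term must be symmetrized through the continuous problem (using $u_j^h$ as a test function in \eqref{eq:vf}) rather than through the discrete one, because $u_j$ is not in $V_h$. The symmetry of $a(\cdot,\cdot)$ is used implicitly when identifying $a(u_j,u_j^h)=a(u_j^h,u_j)$, and the whole argument is purely algebraic once the two eigenvalue equations and the $L^2$ normalizations are in place.
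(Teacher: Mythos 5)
Your proof is correct and follows essentially the same route the paper uses: although the paper cites Lemma \ref{lem:pet} from the literature rather than proving it directly, the identical computation---bilinearity of $a(\cdot,\cdot)$, the endpoint terms reduced via the two eigenvalue equations and the $L^2$ normalizations, and the cross term handled by testing the continuous problem \eqref{eq:vf} with the conforming discrete eigenfunction $u_j^h \in V_h \subset H_0^1(\Omega)$---is exactly the expansion the paper carries out in its proofs of Theorem \ref{thm:efe} and Corollary \ref{coro:ee} for the quadrature-modified eigenpairs. Your observation that the cross term must be symmetrized through the continuous problem (since $u_j \notin V_h$) is precisely the right subtlety to flag, and the rest is the same algebra.
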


In general, applying quadrature rules to the inner products $a(\cdot, \cdot)$ and $b(\cdot, \cdot)$ results in quadrature errors. We denote by $\| \cdot \|_{E,h} = \sqrt{\tilde a_h(\cdot, \cdot)}$ the (approximate) energy norm evaluated by a quadrature rule \eqref{eq:ba}, the generalized Pythagorean eigenvalue theorem, c.f., \cite{puzyrev2017dispersion}, is stated as follows
\begin{theorem} \label{thm:gpet}
For each discrete mode, with the normalization $\| u_j \|_{0, \Omega} = 1$ and $\tilde b_h( \tilde u_j^h, \tilde u_j^h) = 1$, there holds
\begin{equation} \label{eq:gpet}
\| u_j - \tilde u_j^h \|_E^2 =  \tilde \lambda_j^h - \lambda_j + \lambda_j \| u_j - \tilde u_j^h \|^2_{0, \Omega} + \| \tilde u^h_j \|_E^2 - \| \tilde u^h_j  \|_{E,h}^2 + \lambda_j \Big( 1 - \| \tilde u^h_j  \|^2_{0, \Omega} \Big).
\end{equation}

\end{theorem}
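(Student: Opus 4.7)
The plan is to mimic the proof of the classical Pythagorean identity (Lemma~\ref{lem:pet}) but carefully track the mismatch between the exact forms $a(\cdot,\cdot)$, $b(\cdot,\cdot)$ and the quadrature-perturbed forms $\tilde a_h(\cdot,\cdot)$, $\tilde b_h(\cdot,\cdot)$. Everything follows from bilinearity together with two facts: the exact eigenpair satisfies $a(u_j,v)=\lambda_j b(u_j,v)$ for all $v\in H^1_0(\Omega)$, and the quadrature-perturbed discrete eigenpair satisfies $\tilde a_h(\tilde u_j^h,v_h)=\tilde \lambda_j^h \tilde b_h(\tilde u_j^h,v_h)$ for all $v_h\in V_h$.

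First I would expand the energy error using bilinearity:
\begin{equation*}
\| u_j - \tilde u_j^h \|_E^2 = a(u_j,u_j) - 2\,a(u_j,\tilde u_j^h) + a(\tilde u_j^h,\tilde u_j^h).
\end{equation*}
Because $\|u_j\|_{0,\Omega}=1$, the first term equals $\lambda_j$ by \eqref{eq:vfo}. Since $\tilde u_j^h\in V_h\subset H^1_0(\Omega)$, I can test the exact variational equation for $u_j$ against $\tilde u_j^h$ to get $a(u_j,\tilde u_j^h)=\lambda_j(u_j,\tilde u_j^h)$. The last term I keep as $\|\tilde u_j^h\|_E^2$. In parallel, expanding the $L^2$ error yields
\begin{equation*}
\| u_j - \tilde u_j^h \|_{0,\Omega}^2 = 1 - 2(u_j,\tilde u_j^h) + \|\tilde u_j^h\|_{0,\Omega}^2,
\end{equation*}
which I solve for the cross-term $2(u_j,\tilde u_j^h)$ and substitute above. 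After collecting terms this gives the intermediate identity
\begin{equation*}
\| u_j - \tilde u_j^h \|_E^2 = \|\tilde u_j^h\|_E^2 - \lambda_j + \lambda_j\,\| u_j - \tilde u_j^h \|_{0,\Omega}^2 + \lambda_j\bigl(1-\|\tilde u_j^h\|_{0,\Omega}^2\bigr).
\end{equation*}

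The remaining step is to replace $\|\tilde u_j^h\|_E^2$ by $\tilde\lambda_j^h$ up to a quadrature defect. Testing the discrete equation \eqref{eq:vfho} against $v_h=\tilde u_j^h$ and using the normalization $\tilde b_h(\tilde u_j^h,\tilde u_j^h)=1$ gives $\|\tilde u_j^h\|_{E,h}^2 = \tilde a_h(\tilde u_j^h,\tilde u_j^h)=\tilde\lambda_j^h$. Writing $\|\tilde u_j^h\|_E^2 = \tilde\lambda_j^h + \bigl(\|\tilde u_j^h\|_E^2-\|\tilde u_j^h\|_{E,h}^2\bigr)$ and inserting into the intermediate identity produces exactly \eqref{eq:gpet}.

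There is no real obstacle; the argument is pure bilinear bookkeeping. The only subtlety worth flagging is that the classical Lemma~\ref{lem:pet} relies on $\|u_j^h\|_{0,\Omega}=1$, whereas here $\tilde u_j^h$ is normalized with respect to the discrete inner product $\tilde b_h$, so $\|\tilde u_j^h\|_{0,\Omega}$ generally differs from $1$ and $\|\tilde u_j^h\|_E$ generally differs from $\|\tilde u_j^h\|_{E,h}$. The two extra correction terms $\|\tilde u_j^h\|_E^2-\|\tilde u_j^h\|_{E,h}^2$ and $\lambda_j(1-\|\tilde u_j^h\|_{0,\Omega}^2)$ in \eqref{eq:gpet} are precisely the penalties for these two mismatches, so I should be careful not to conflate the continuous and quadrature-based normalizations anywhere in the derivation.
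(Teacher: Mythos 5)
Your proof is correct and is essentially the paper's own argument: the paper states Theorem~\ref{thm:gpet} with a citation to the literature, but the identical computation (expand $\| u_j - \tilde u_j^h \|_E^2$ by bilinearity, use $a(u_j,\tilde u_j^h)=\lambda_j b(u_j,\tilde u_j^h)$ for the cross term, the $L^2$-error identity, and $\tilde a_h(\tilde u_j^h,\tilde u_j^h)=\tilde\lambda_j^h$ under the $\tilde b_h$-normalization) appears inside the paper's proof of Theorem~\ref{thm:efe}, differing only in that the paper substitutes the discrete eigen-relation early while you insert it at the end. The two quadrature-defect terms you isolate match \eqref{eq:gpet} exactly, so there is no gap.
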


\begin{remark} \label{rem:pt}
The Pythagorean eigenvalue theorem reveals the properties of the numerical approximation of the eigenvalue problem  when the inner products are fully represented in the discrete setting, while the generalized theorem  accounts for the effect of the numerical integration.
\end{remark}

\section{The discrete dispersion relation and dispersion optimization} \label{sec:ddr}

In the linear element case, isogeometric analysis and finite elements coincide and result in the same discrete dispersion relation; see for example \cite{ainsworth2004discrete, ainsworth2009dispersive, ainsworth2010optimally}.  We start with quadratic elements and focus on the one-dimensional case with $\Omega = [0, 1]$ to illustrate the idea. 
We seek an approximate solution of the form 
\begin{equation} \label{eq:1dsol}
U (x) = \sum_{a=0 }^{N} U^a B_a^p(x)
\end{equation}
satisfying 
\begin{equation} \label{eq:bho}
\tilde B_h(U, v_h) = 0, \quad \forall \ v_h  \in V_h,
\end{equation}
where
$
\tilde B_h(w, v) = \tilde a_h(w, v) - \lambda \tilde b_h(w, v).
$

\subsection{Quadratic elements} \label{sec:quadraticelem} 
We consider the $C^1$ quadratic B-spline basis function of the isogeometric analysis (for quadratic finite elements, we refer to \cite{ainsworth2004discrete,ainsworth2010optimally}). Applying quadrature rule $G_3$, one obtains the following equation for the value $U^a$ of the approximation at node $x_a = ah$ where $a =3, 4, \cdots, N - 2$ (that is, a node away from boundary),
 \begin{equation} \label{eq:quadraticg3}
 \begin{aligned}
( 20 + \Lambda^2 ) ( U^{a-2} + U^{a+2})+ ( 40 + 26 \Lambda^2 ) ( U^{a-1} + U^{a+1} ) - (120 - 66 \Lambda^2 ) U^a=0,
 \end{aligned}
 \end{equation}
 where $\Lambda = \sqrt{\lambda} h$. 
 Let $i^2 = -1$. This equation admits nontrivial solutions of the form 
 \begin{equation} \label{eq:1dbsol}
U^a = e^{ia \mu^{(2)}_{G_3}h}
\end{equation}
provided that $\mu^{(2)}_{G_3}$ satisfies 
 \begin{equation} \label{eq:quadraticg3dr}
 \begin{aligned}
( 20 + \Lambda^2 ) \cos^2(\mu^{(2)}_{G_3}h) + ( 20 + 13 \Lambda^2 ) \cos(\mu^{(2)}_{G_3}h) + (-40 + 16 \Lambda^2 ) = 0,
 \end{aligned}
 \end{equation}
which is known as the \textit{discrete dispersion relation} (c.f., \cite{ainsworth2004discrete}) for the discrete method with a particular quadrature rule. Solving \eqref{eq:quadraticg3dr} for $\mu^{(2)}_{G_3}h$ and writing the expression as a series in $\Lambda$ (assuming $\Lambda = \sqrt{\lambda} h<1$), we obtain the \textit{discrete dispersion error} 
\begin{equation} \label{eq:quadraticg3dre}
\mu^{(2)}_{G_3}h = \Lambda - \frac{1}{1440} \Lambda^5 - \frac{1}{6720} \Lambda^7 + \mathcal{O}(\Lambda^9).
\end{equation}
 
Now applying $GL_3$, one obtains
 \begin{equation} \label{eq:quadraticgl3}
 \begin{aligned}
( 16 + \Lambda^2 ) ( U^{a-2} + U^{a+2})+ ( 32 + 20 \Lambda^2 ) ( U^{a-1} + U^{a+1} ) - (96 - 54 \Lambda^2 ) U^a=0,
 \end{aligned}
 \end{equation}
 which leads to the discrete dispersion relation
 \begin{equation} \label{eq:quadraticgl3dr}
 \begin{aligned} 
( 16 + \Lambda^2 ) \cos^2(\mu^{(2)}_{GL_3}h) + ( 16 + 10 \Lambda^2 ) \cos(\mu^{(2)}_{GL_3}h) + (-32 + 13 \Lambda^2 ) = 0.
 \end{aligned}
 \end{equation}
Solving \eqref{eq:quadraticgl3dr} for $\mu^{(2)}_{GL_3}h$ and writing the expression as a series in $\Lambda$, we obtain
\begin{equation} \label{eq:quadraticgl3dre}
\mu^{(2)}_{GL_3}h = \Lambda + \frac{1}{2880} \Lambda^5 - \frac{1}{16128} \Lambda^7 + \mathcal{O}(\Lambda^9).
\end{equation}

Lastly, we apply $G_2$ to obtain 
 \begin{equation} \label{eq:quadraticg2}
 \begin{aligned}
( 24 + \Lambda^2 ) ( U^{a-2} + U^{a+2})+ ( 48 + 32 \Lambda^2 ) ( U^{a-1} + U^{a+1} ) - (144 - 78 \Lambda^2 ) U^a=0,
 \end{aligned}
 \end{equation}
 which leads to the discrete dispersion relation
 \begin{equation} \label{eq:quadraticg2dr}
 \begin{aligned} 
( 24 + \Lambda^2 ) \cos^2(\mu^{(2)}_{G_2}h) + ( 24 + 16 \Lambda^2 ) \cos(\mu^{(2)}_{G_2}h) + (-48 + 19 \Lambda^2 ) = 0.
 \end{aligned}
 \end{equation}
Solving \eqref{eq:quadraticg2dr} for $\mu^{(2)}_{G_2}h$ and writing the expression as a series in $\Lambda$, we obtain
\begin{equation} \label{eq:quadraticg2dre}
\mu^{(2)}_{G_2}h = \Lambda - \frac{1}{720} \Lambda^5 - \frac{5}{24192} \Lambda^7 + \mathcal{O}(\Lambda^9).
\end{equation}

\begin{remark} \label{rem:quadratic}
$G_3$ integrates both the stiffness and mass terms exactly while $GL_3$ and $G_2$ integrate the stiffness terms exactly but under-integrate the mass ones. However, all of them yield the optimal order $h^{2p}$ of convergence. Similar calculations also indicate that once the stiffness term is  under-integrated, for example by $GL_2$ or $G_1$, the optimal convergence is lost, which is expected in the view of Strang's second lemma. The impact of under-integration on convergence is discussed in \cite{strang1973analysis} among others.
\end{remark}

The leading coefficient $-\frac{1}{1440}$ in \eqref{eq:quadraticg3dre} and the leading coefficient $\frac{1}{2880}$ in \eqref{eq:quadraticgl3dre} allows us to conjecture that the blending rule $\frac{1}{3} G_3 + \frac{2}{3} GL_3$ eliminates the leading term in the dispersion error since there holds $\frac{1}{3} \cdot (-\frac{1}{1440}) + \frac{2}{3} \cdot \frac{1}{2880} = 0.$   
Hence, the signs and coefficients of \eqref{eq:quadraticg3dre}, \eqref{eq:quadraticgl3dre}, and \eqref{eq:quadraticg2dre} allow us to propose the following blendings
\begin{equation} \label{eq:quadraticg3gl3}
\frac{1}{3} G_3 + \frac{2}{3} GL_3, \qquad 2 G_3 -  G_2, \qquad \text{and} \qquad \frac{4}{5} GL_3 + \frac{1}{5}  G_2.
\end{equation}
All of them lead to the discrete dispersion relation for the optimal scheme
 \begin{equation} \label{eq:quadratico2dr}
 \begin{aligned} 
( 120 + 7\Lambda^2 ) \cos^2(\mu^{(2)}_{O_2}h) + ( 120 + 76 \Lambda^2 ) \cos(\mu^{(2)}_{O_2}h) + (-240 + 97 \Lambda^2 ) = 0,
 \end{aligned}
 \end{equation}
which gives the optimal dispersion error
\begin{equation} \label{eq:quadraticoptdre}
\mu^{(2)}_{O_2}h = \Lambda - \frac{11}{120960} \Lambda^7 - \frac{1}{345600} \Lambda^9 + \mathcal{O}(\Lambda^{11}).
\end{equation}

\begin{remark} \label{rem:quadratic2}
The blending rules allow us to eliminate the fifth order error and give a seventh order error. Moreover, the coefficient of the seventh order error is the sum of the corresponding ones in \eqref{eq:quadraticg3dre}, \eqref{eq:quadraticgl3dre}, and \eqref{eq:quadraticg2dre} with weights specified in \eqref{eq:quadraticg3gl3}. However, calculation shows that this is not the case for the coefficients of the ninth or higher orders. 
\end{remark}

To show that the blending is optimal, applying the general blending rule
\begin{equation}
\mathcal{Q}_\tau = \tau G_3 + (1- \tau) GL_3
\end{equation}
gives the dispersion error
\begin{equation} 
\mu^{(2)}_{\mathcal{Q}_\tau}h = \Lambda - \frac{-1 + 3\tau }{2880} \Lambda^5 - \frac{5+7\tau}{80640} \Lambda^7 + \mathcal{O}(\Lambda^9),
\end{equation}
where $\tau = 1/3$ eliminates the fifth order error term, hence it is the optimal blending parameter. Similarly, one can verify the optimal blending parameters in \eqref{eq:quadraticg3gl3}. 
Alternatively, one can optimize the dispersion error directly from the error expansion in the most general form for the mass
\begin{equation} \label{eq:quadraticgf}
 \begin{aligned}
-\frac{1}{6} ( U^{a-2} + U^{a+2}) - \frac{1}{3} ( U^{a-1} + U^{a+1} ) +  U^a - \Lambda^2 \big( \alpha ( U^{a-2} + U^{a+2}) & \\
+ \beta ( U^{a-1} + U^{a+1} ) + (1 - 2\alpha - 2\beta) U^a  \big) & = 0,
 \end{aligned}
 \end{equation}
 where $\alpha$ and $\beta$ are parameters representing the approximated mass entries. Herein, partition of unity requires the coefficient of $U^a$ in the mass term in \eqref{eq:quadraticgf} to be $(1 - 2\alpha - 2\beta)$.
This leads to a dispersion error expansion 
\begin{equation} \label{eq:quadraticode}
\begin{aligned} 
\mu^{(2)}_{O_2}h  = & \Lambda + \left(-2 \alpha-\frac{\beta}{2}+\frac{1}{8}\right) \Lambda^3 \\
& +\frac{ \left(34560 \alpha^2+17280 \alpha \beta-3360 \alpha+2160 \beta^2-1560\beta + 227\right)}{5760} \Lambda^5  \\
& + \frac{\Lambda^7 }{64512} \Big(-1290240 \alpha^3-967680 \alpha^2 \beta+134400 \alpha^2-241920 \alpha \beta^2+147840 \alpha \beta \\
& -17808 \alpha-20160 \beta^3+28560 \beta^2-10164 \beta+1039\Big) + \mathcal{O}( \Lambda^9).
\end{aligned}
\end{equation}

\noindent
To optimize the dispersion error, we set and solve
\begin{equation}
\begin{aligned}
-2 \alpha-\frac{\beta}{2}+\frac{1}{8} & = 0 \\
34560 \alpha^2+17280 \alpha \beta-3360 \alpha+2160 \beta^2-1560\beta + 227 & = 0
\end{aligned}
\end{equation}
to obtain
\begin{equation}
\alpha = \frac{7}{720}, \qquad \beta = \frac{19}{90},
\end{equation}
which in return simplifies \eqref{eq:quadraticode} to the optimized dispersion error expression
\begin{equation} 
\mu^{(2)}_{O_2}h = \Lambda - \frac{11}{120960} \Lambda^7 + \mathcal{O}(\Lambda^{9}).
\end{equation}

Thus, this alternative way leads to the same optimized dispersion error. In fact, values of $\alpha$ and $\beta$ uniquely determine the coefficients in the blending schemes \eqref{eq:quadraticg3gl3}.

\subsection{Cubic elements} \label{sec:cubicelem} 
Now we consider the cubic B-spline element case with maximum continuity, that is, $C^2$ basis functions. Similarly, applying $G_4, GL_4$, and $G_3$, we obtain the following discrete dispersion relations
\begin{equation} \label{eq:cubicg4dr}
 \begin{aligned}
0 & = (42  + \omega^2h^2  ) \cos^3(\mu^{(3)}_{G_4} h)  + (504 + 60\omega^2h^2 ) \cos^2(\mu^{(3)}_{G_4} h) \\
& \quad + (126 + 297 \omega^2h^2)  \cos(\mu^{(3)}_{G_4} h) + (-672 + 272 \omega^2h^2),\\
0 & = (90  + 2\omega^2h^2 ) \cos^3(\mu^{(3)}_{GL_4} h) + (1080 + 129\omega^2h^2 ) \cos^2(\mu^{(3)}_{GL_4} h) \\ 
& \quad + (270 + 636 \omega^2h^2)  \cos(\mu^{(3)}_{GL_4} h) + (-1440 + 583 \omega^2h^2),\\
0 &= (120 + 3\omega^2h^2 ) \cos^3(\mu^{(3)}_{G_3} h) + (1440 + 171\omega^2h^2 ) \cos^2(\mu^{(3)}_{G_3} h) \\ 
&\quad + (360 + 849 \omega^2h^2)  \cos(\mu^{(3)}_{G_3} h) + (-1920 + 777 \omega^2h^2). \\
 \end{aligned}
 \end{equation}
They lead to the dispersion error expansions given below
\begin{equation} \label{eq:cubicg4dre}
 \begin{aligned}
\mu^{(3)}_{G_4}h & = \Lambda - \frac{1}{60480} \Lambda^7 - \frac{1}{907200} \Lambda^9 + \mathcal{O}(\Lambda^{11}), \\
\mu^{(3)}_{GL_4}h & = \Lambda - \frac{1}{100800} \Lambda^7 - \frac{11}{1814400} \Lambda^9 + \mathcal{O}(\Lambda^{11}), \\
\mu^{(3)}_{G_3}h & = \Lambda - \frac{13}{604800} \Lambda^7 - \frac{37}{7257600} \Lambda^9 + \mathcal{O}(\Lambda^{11}).
 \end{aligned}
\end{equation}

Following the procedure for the quadratic case, we obtain the corresponding optimal blending schemes 
\begin{equation}
-\frac{3}{2} G_4  + \frac{5}{2} GL_4, \qquad \frac{13}{3} G_4 - \frac{10}{3} G_3, \qquad \text{and} \qquad \frac{13}{7} GL_4 - \frac{6}{7} G_3,
\end{equation}
which all result in the following optimized dispersion error expression
\begin{equation} \label{eq:cubicoptdre}
\mu^{(3)}_{O_3}h = \Lambda - \frac{1}{145152} \Lambda^9 + \frac{19}{68428800} \Lambda^{11} + \mathcal{O}(\Lambda^{13}).
\end{equation}

\begin{remark} \label{rem:cubic}
As in the quadratic case, one can assign $\alpha, \beta$, and $\gamma$ to the mass terms to derive the optimized dispersion error expression. There are other blending schemes which give the same optimized dispersion error expression. Here, we list a few
\begin{equation} 
\begin{aligned}
& \frac{4}{35} GL_3 + \frac{36}{35} G_2 - \frac{1}{7} GL_2, && \qquad \frac{10}{49} G_3 + \frac{234}{245} G_2 - \frac{39}{245} GL_2, \\
& \frac{20}{7} G_3 - \frac{52}{35} GL_3 - \frac{13}{35} GL_2, && \qquad \frac{10}{7} GL_4 - \frac{12}{35} GL_3 - \frac{3}{35} GL_2. \\
\end{aligned}
\end{equation}
Herein, blending more quadrature rules does not reduce the dispersion and eigenvalue errors further.
\end{remark}

\begin{table}[ht]
\centering
\begin{tabular}{ c | c  c  c  c }
$p$ & $G_{p+1}$ & $GL_{p+1}$  & $G_p$ & $O_p$ \\[0.1cm]  \hline 
4 & $\dfrac{3}{20 \cdot 9!}$ & $\dfrac{79}{560 \cdot 9!}$ & $\dfrac{11}{70 \cdot 9!}$ & $\dfrac{317}{24\cdot 11!}$  \\[0.3cm]  
5 & $\dfrac{5}{12 \cdot 11!}$ & $\dfrac{29}{70 \cdot 11!}$ & $\dfrac{211}{504 \cdot 11!}$ & $\dfrac{35039}{420\cdot 13!}$ \\[0.3cm] 
6 & $\dfrac{691}{420 \cdot 13!}$ & $\dfrac{91177}{55440 \cdot 13!}$ & $\dfrac{5069}{3080 \cdot 13!} $ &$\dfrac{15479}{24\cdot 15!}$ \\[0.3cm] 
7 & $\dfrac{35}{4 \cdot 15!} $ & $ \dfrac{105103}{12012 \cdot 15!}$ & $ \dfrac{60061}{6864 \cdot 15!}$ & $\dfrac{91067}{15\cdot17!}$ \\[0.3cm]  
\end{tabular}
\caption{Leading order coefficients of the discrete dispersion relations for the different quadratures described and for different polynomial orders $p=4,5,6,7$.}
 \label{tab:4-7}
\end{table}

\subsection{Higher-order elements} \label{sec:hoelem} 
For higher order elements, the calculations and derivations become more complicated. In the following, we list our results up to order seven for isogeometric analysis  with maximum continuity basis functions. As before, for orders $p=4,5,6,7$, we apply $G_{p+1}, GL_{p+1}$, and $G_p$ to compute the integrals. Their corresponding dispersion error expressions are of the form
\begin{equation} \label{eq:no}
\mu^{(p)}_{Q}h = \Lambda - \epsilon^Q_p \Lambda^{2p+1} + \mathcal{O}(\Lambda^{2p+3}),
\end{equation}
for $Q=G_{p+1}, GL_{p+1}, G_p$, while the optimized dispersion expressions are of the form
\begin{equation} \label{eq:o}
\mu^{(p)}_{O_p}h = \Lambda - \epsilon_p^o \Lambda^{2p+3} + \mathcal{O}(\Lambda^{2p+5}).
\end{equation}

In Table \ref{tab:4-7}, we list the coefficients $\epsilon^Q_p$ and $\epsilon_p^o$ for $p=4,5,6,7$. As before, the optimal blending schemes are not unique; we list a few of them below
\begin{equation} 
\begin{aligned}
p & = 4 \qquad  \quad -\frac{79}{5} G_5 + \frac{84}{5} GL_5, && \qquad 22G_5 - 21 G_4, \\
p & = 5  \qquad  \quad  -174G_6 + 175GL_6, && \qquad 211G_6 -210G_5, \\
p & = 6 \qquad  \quad  -\frac{91177}{35} G_7 + \frac{91212}{35} GL_7, && \qquad \frac{30414}{10} G_7 - \frac{30404}{10} G_6, \\
p & = 7 \qquad  \quad  -\frac{105103}{2} G_8 + \frac{105105}{2} GL_8, && \qquad 60061G_8 - 60060G_7.
\end{aligned}
\end{equation}

These blending schemes can be rewritten as 
\begin{equation} \label{eq:blending}
G_{p+1} + C_{1,p} \cdot (GL_{p+1} - G_{p+1}) \qquad \text{and} \qquad G_{p+1} + C_{2,p} \cdot (G_{p+1} - G_{p}),
\end{equation}
where $C_{1,p}$ and $C_{2,p}$ depend on $p$ and satisfy
\begin{equation} \label{eq:2br}
pC_{2,p} - (p+1) C_{1,p} = 0.
\end{equation}
Thus, once the general form of one of them is obtained, the other one can be derived from \eqref{eq:2br}.

\begin{remark} \label{rem:high}
The analysis of the possible generalization of the optimal blending schemes to arbitrary order $p$ with variable continuities $C^k, k=1,\cdots,p-1,$ is an open question and is still under investigation. 
\end{remark}

\subsection{Extension to multiple dimensions} \label{sec:md} 
The extension is similar to the extension done for finite elements in \cite{ainsworth2010optimally}. The conclusion is that the optimal blending for arbitrary dimension coincides with the one we derive above for the one-dimensional case and is independent of the number of spatial dimensions.

In multiple dimensions, a $d$-dimensional basis function is a tensor product of one-dimensional basis functions, that is,
$
\prod_{k=1}^d B_{n_k}
$ 
(we drop $p$ here as the following derivation is independent of the polynomial order $p$).
We seek a solution of the form 
\begin{equation} \label{eq:mdsol}
U(\bfs{x}) = \prod_{k=1}^d U_k(x_k),
\end{equation}
where each $U_k(x_k)$ has a one-dimensional representation \eqref{eq:1dsol}, which is further written by using \eqref{eq:1dbsol} as
\begin{equation}
U_k(x_k) = \sum_{n_k =0}^{N_k} e^{in_k \sqrt{\lambda_k} h_k} B_{n_k}
\end{equation}
with $\lambda_k$ (only in this subsection) being the squared wave number in the dimension $k=1, \cdots, d$. 
This allows us to also write 
\begin{equation} \label{eq:mdbsol}
\begin{aligned}
U(\bfs{x}) & = \sum_{n_1, \cdots, n_d} \Big( e^{in_1 \sqrt{\lambda_1} h_1 + \cdots + in_d \sqrt{\lambda_d} h_d} \prod_{k=1}^d B_{n_k} \Big) \\
& =  \sum_{n_1, \cdots, n_d} \Big( U_{n_1, \cdots, n_d} \prod_{k=1}^d B_{n_k} \Big),
\end{aligned}
\end{equation}
where $U_{n_1, \cdots, n_d}$ with $n_k = 0,1, \cdots, N_k, k=1,\cdots, d$ are the coefficients of the linear combination of the  multi-dimensional basis functions.

From \eqref{eq:mdsol}, we have
\begin{equation} \label{eq:mdb}
\begin{aligned}
\tilde b_h\big( U(\bfs{x}), \prod_{k=1}^d B_{n_k} \big) = \prod_{k=1}^d \tilde b_h\big(U_k(x_k), B_{n_k} (x_k) \big).
\end{aligned}
\end{equation}

\noindent
Suppose that in each dimension $k$, $U_k(x_k)$ satisfies \eqref{eq:bho}, then
\begin{equation} \label{eq:1dbho}
\tilde a_h\big(U_k(x_k), B_{n_k} (x_k) \big) - \lambda_k \tilde b_h\big(U_k(x_k), B_{n_k} (x_k) \big) = 0.
\end{equation}
By applying the one-dimensional equations \eqref{eq:1dbho} with $k=1,\cdots,d$, we obtain
\begin{equation} \label{eq:mda}
\begin{aligned}
\tilde a_h\big( U(\bfs{x}), \prod_{k=1}^d B_{n_k} \big) & = \sum_{l=1}^d \Big( \tilde a_h\big( U_l (x_l), B_{n_l} (x_l) \big)  \prod_{k \ne l, k=1}^d \tilde b_h\big(U_k(x_k), B_{n_k} (x_k) \big) \Big) \\
& = \sum_{l=1}^d \Big( \lambda_l  \prod_{k=1}^d \tilde b_h\big(U_k(x_k), B_{n_k} (x_k) \big) \Big) \\
& = \Big( \sum_{l=1}^d  \lambda_l \Big) \Big(\prod_{k=1}^d \tilde b_h\big(U_k(x_k), B_{n_k} (x_k) \big) \Big).
\end{aligned}
\end{equation}

The multi-dimensional problem is to find $U(\bfs{x})$ satisfying for all the basis functions
\begin{equation} \label{eq:md}
\tilde a_h\big( U(\bfs{x}), \prod_{k=1}^d B_{n_k} \big) - \lambda \tilde b_h\big( U(\bfs{x}), \prod_{k=1}^d B_{n_k} \big) = 0.
\end{equation}

Plugging \eqref{eq:mdb} and \eqref{eq:mda} into \eqref{eq:md}, the multidimensional problem admits a nontrivial solution of the form $\eqref{eq:mdbsol}$ provided that 
\begin{equation} \label{eq:1dmd}
\lambda = \sum_{k=1}^d  \lambda_k.
\end{equation}

Replacing the $\lambda$ in the expression \eqref{eq:bho} for one dimension and \eqref{eq:md} for multiple dimensions with the numerical approximated $\tilde \lambda^h_k$ and $\lambda^h$, respectively, the same derivations lead to
 \begin{equation} \label{eq:1dmdh}
\tilde \lambda^h = \sum_{k=1}^d  \lambda_k^h.
\end{equation}

Thus, subtracting \eqref{eq:1dmd} from \eqref{eq:1dmdh}, the dispersion error for multidimensional problems consists of the dispersion errors for each dimension, that is,
 \begin{equation} \label{eq:1dmderr}
\tilde \lambda^h - \lambda = \sum_{k=1}^d  ( \tilde \lambda_k^h - \lambda_k ),
\end{equation}
which means that the numerical schemes for the multi-dimensional problem \eqref{eq:pdee} preserve the error estimations of the one-dimensional problem when using the tensor product structure for the mesh discretization.

\subsection{Duality with spectrum analysis} \label{sec:sa} 
There is a symmetry between \eqref{eq:pde} and \eqref{eq:pdee}.  The duality between spectrum analysis and dispersion analysis has been established in \cite{hughes2008duality}. We denote the approximate squared frequencies by $\tilde \lambda^{p,h}_Q$, where $Q$ specifies the quadrature rule. For spectrum analysis in the quadratic case, we obtain
\begin{equation} \label{eq:dual2g3}
\begin{aligned}
\sqrt{\tilde \lambda^{2,h}_{G_3}} h & = \sqrt{ \frac{40 - 20 \cos(\Lambda) - 20 \cos^2(\Lambda) }{16 + 13 \cos(\Lambda) + \cos^2(\Lambda) } }, 
\sqrt{\tilde \lambda^{2,h}_{GL_3}} h  = \sqrt{ \frac{32 - 16 \cos(\Lambda) - 16 \cos^2(\Lambda) }{13 + 10 \cos(\Lambda) + \cos^2(\Lambda) } }, \\
\sqrt{\tilde \lambda^{2,h}_{G_2}} h & = \sqrt{ \frac{48 - 24 \cos(\Lambda) - 24 \cos^2(\Lambda) }{19 + 16 \cos(\Lambda) + \cos^2(\Lambda) } }, 
\sqrt{\tilde \lambda^{2,h}_{O_2}} h = \sqrt{ \frac{120 \big(2 -  \cos(\Lambda) - \cos^2(\Lambda) \big) }{97 + 76 \cos(\Lambda) + 7\cos^2(\Lambda) } }, \\
\end{aligned}
\end{equation}
where the first equation is also given in \cite{hughes2008duality}.

Applying the Taylor expansion on these expressions gives
\begin{equation}
\begin{aligned}
\sqrt{\tilde \lambda^{2,h}_{G_3}} h & = \Lambda + \frac{1}{1440} \Lambda^5 + \mathcal{O}(\Lambda^7), 
\sqrt{\tilde \lambda^{2,h}_{GL_3}}h  = \Lambda - \frac{1}{2880} \Lambda^5 + \mathcal{O}(\Lambda^7), \\
\sqrt{\tilde \lambda^{2,h}_{G_2}}h & = \Lambda + \frac{1}{720} \Lambda^5 + \mathcal{O}(\Lambda^7), 
\sqrt{\tilde \lambda^{2,h}_{O_2}}h  = \Lambda + \frac{11}{120960} \Lambda^7 + \mathcal{O}(\Lambda^9). \\
\end{aligned}
\end{equation}

Similarly, this spectrum analysis can be done for cubic and higher order methods. In general, for multiple dimensions, in the view of \eqref{eq:1dmderr}, we obtain error expressions for $p=2,\cdots, 7$ in the forms
\begin{equation} \label{eq:sno}
\sqrt{\tilde \lambda^{p,h}_{Q}} h = \Lambda + \epsilon^Q_p \Lambda^{2p+1} + \mathcal{O}(\Lambda^{2p+3})
\end{equation}
for $Q=G_{p+1}, GL_{p+1}, G_p$ and
\begin{equation} \label{eq:so}
\sqrt{\tilde \lambda^{p,h}_{O_p}} h = \Lambda + \epsilon_p^o \Lambda^{2p+3} + \mathcal{O}(\Lambda^{2p+5})
\end{equation}
for the optimal schemes. For the cubic case, the coefficients $\epsilon_3^o = 1/145152$ and $\epsilon^Q_3 = 1/60480$, $1/100800, 13/604800$ for $Q = G_3, GL_3$, and $G_2$, respectively, while for $p=4$, $5,6,7$, coefficients $\epsilon^Q_p$ and $\epsilon_p^o$ are given in Table \ref{tab:4-7}.

\begin{remark} \label{rem:sa}
Equations \eqref{eq:sno} with \eqref{eq:no} and \eqref{eq:so} with \eqref{eq:o} reveal the duality principle of dispersion and spectrum analysis in the error expansion form.  The duality principle remains valid for the optimal blending schemes. The different signs of the coefficients $\epsilon^Q_p$ and $\epsilon_p^o$ in the error expressions are consequences of duality. 
\end{remark}

\section{Error analysis} \label{sec:ea}
In the framework of finite element analysis, the eigenvalue and eigenfunction errors for both $(\lambda_j^h, u_j^h)$ and $(\tilde \lambda_j^h, \tilde u_j^h)$ are of the forms \eqref{eq:femeverr}, c.f., \cite{strang1973analysis, banerjee1989estimation}. 
We now estimate the errors of the approximated eigenpair $(\tilde \lambda_j^h, \tilde u_j^h)$ under the framework of isogeometric analysis.

\subsection{Eigenvalue estimates} \label{sec:evana}
The eigenvalue errors when using the standard quadrature rules, such as Gauss and Lobatto rules, converge at rate $h^{2p}$, c.f., \cite{banerjee1992note,cottrell2006isogeometric,hughes2014finite}. This is also confirmed by our theoretical finding \eqref{eq:sno} for orders $p=2, \cdots, 7$ and we conjecture that this is true for arbitrary order. To see this, squaring both sides of \eqref{eq:sno} and using $\Lambda = \sqrt{\lambda} h$ gives the following estimate
\begin{equation*}
\tilde \lambda^{p,h}_{Q} h^2 = \lambda h^2 + 2 \epsilon^Q_p \lambda^{p+1} h^{2p+2} + \mathcal{O}(\lambda^{p+2} h^{2p+4}),
\end{equation*}
which reduces to 
\begin{equation*}
|\tilde  \lambda^{p,h}_{Q} - \lambda | \le C h^{2p} \lambda^{p+1},
\end{equation*}
where $C$ is a constant independent of $h$. Clearly, this yields the $2p$ order convergence for the eigenvalues. Now, for isogeometric analysis with order up to $p=7$, we have the following eigenvalue estimate

\begin{theorem} \label{thm:eve}
For a fixed $j \geq 1$ and $p=1, \cdots, 7$, assume $\tilde \lambda_j^h = \tilde \lambda^{p,h}_{O_p}$.  For $h$ such that $\max{\{\sqrt{\lambda_j} h, \sqrt{\tilde \lambda_j^h} h \}  } < 1$, we have 
\begin{equation}
| \tilde \lambda_j^h - \lambda_j | \le C h^{2p+2} \lambda_j^{p+2},
\end{equation}
where $C$ is a constant independent of $h$.
\end{theorem}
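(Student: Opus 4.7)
The plan is to reduce the theorem to a direct calculation using the spectrum error expansion \eqref{eq:so} that was derived in Section~\ref{sec:sa} via the duality principle. Since the theorem explicitly assumes that $\tilde \lambda_j^h = \tilde \lambda^{p,h}_{O_p}$ comes from the optimal blending scheme, the superconvergence is already encoded in \eqref{eq:so}, and what remains is to convert from the variable $\Lambda = \sqrt{\lambda_j}\,h$ back to the eigenvalue itself.

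The main step is to start from
\begin{equation*}
\sqrt{\tilde \lambda^{p,h}_{O_p}}\,h = \Lambda + \epsilon_p^o\, \Lambda^{2p+3} + \mathcal{O}(\Lambda^{2p+5}),
\end{equation*}
square both sides, and then substitute $\Lambda^2 = \lambda_j h^2$ to obtain
\begin{equation*}
\tilde \lambda^{p,h}_{O_p}\, h^2 = \lambda_j h^2 + 2 \epsilon_p^o\, \lambda_j^{p+2} h^{2p+4} + \mathcal{O}\!\left( \lambda_j^{p+3} h^{2p+6} \right).
\end{equation*}
Dividing by $h^2$ and taking absolute values yields $|\tilde \lambda_j^h - \lambda_j| \le C h^{2p+2} \lambda_j^{p+2}$, with $C$ depending on $\epsilon_p^o$ and the coefficient of the first dropped term in \eqref{eq:so}. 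For the multidimensional case, I would invoke the decomposition \eqref{eq:1dmderr}, which expresses the total eigenvalue error as the sum over coordinate directions of the one-dimensional dispersion errors; applying the one-dimensional bound in each direction and summing over the finitely many dimensions preserves the form of the estimate, with the constant absorbing the dimension $d$.

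The main technicality, rather than a deep obstacle, is justifying that the $\mathcal{O}(\Lambda^{2p+5})$ remainder is genuinely bounded by a constant multiple of $\Lambda^{2p+5}$ (not merely asymptotic), so that it can be absorbed into $C h^{2p+2} \lambda_j^{p+2}$ once we divide through. This is where the hypothesis $\max\{\sqrt{\lambda_j}\,h,\, \sqrt{\tilde \lambda_j^h}\,h\} < 1$ enters: it places both the exact and approximate scaled frequencies inside the domain of convergence of the Taylor series obtained by inverting the discrete dispersion relation (or equivalently the square-root expansion of the spectrum formulas \eqref{eq:dual2g3}). Within this range, the tail of the series is dominated geometrically by its leading dropped term, producing a uniform constant $C$ depending on $p$ but independent of $h$, which completes the estimate for $p = 1, \ldots, 7$.
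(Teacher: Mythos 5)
Your proposal is correct and takes essentially the same route as the paper: square the spectrum expansion \eqref{eq:so}, substitute $\Lambda = \sqrt{\lambda_j}\,h$, and absorb the higher-order remainder under the hypothesis $\Lambda < 1$, exactly as in the paper's two-line proof. The only detail handled differently is $p=1$: since \eqref{eq:so} is stated only for $p=2,\cdots,7$, the paper disposes of the linear case by citing the result of \cite{ainsworth2010optimally} (isogeometric and finite elements coincide there), whereas you implicitly apply the expansion to $p=1$ as well — a harmless but technically unstated extension.
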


\begin{proof}
For order $p=1$, this is shown in \cite{ainsworth2010optimally}. For $p=2, \cdots, 7,$ we have the error representation \eqref{eq:so}. For a fixed $j$, squaring both sides of \eqref{eq:so} and using $\Lambda = \sqrt{\lambda_j} h$ gives the following estimate
\begin{equation*}
\tilde \lambda^{p,h}_{O_p} h^2 = \lambda_j h^2 + 2 \epsilon^{o}_p \lambda_j^{p+2} h^{2p+4} + \mathcal{O}(\lambda_j^{p+3} h^{2p+6}),
\end{equation*}
which reduces to the desired result.
\end{proof}

This theorem shows that the optimally-blended schemes produce two extra orders of convergence for the eigenvalue errors.

\subsection{Eigenfunction estimates} \label{sec:efana}
In this section, we establish the optimal convergence rates of the eigenfunction errors. First, we establish the coercivity of the bilinear forms \eqref{eq:ba} and \eqref{eq:bb}.

\begin{lemma}\label{lem:coe}
Given the blending scheme of the form \eqref{eq:blending}, the bilinear forms \eqref{eq:ba} and \eqref{eq:bb} are coercive, that is,  there holds
\begin{equation}
\tilde a_h(v_h, v_h) \ge \tilde \alpha |v_h|^2_{1, \Omega}  \quad \text{and} \quad \tilde b_h(v_h, v_h) \ge \tilde \beta \| v_h \|^2_{0, \Omega}, \quad \forall \ v_h \ \in V_h,
\end{equation}
where $\tilde \alpha, \tilde \beta >0$ are constants independent of $h$. 
\end{lemma}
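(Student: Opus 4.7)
The plan treats the two forms separately and, in each, reduces the multi-dimensional coercivity to a one-dimensional statement via the tensor-product structure of the mesh, basis, and quadrature. The stiffness form turns out to be straightforward; the mass form contains the essential difficulty.

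For $\tilde a_h$ in one dimension, the three component rules $G_{p+1}$, $GL_{p+1}$, and $G_p$ each have algebraic degree of exactness at least $2p-1$, which exceeds the degree $2p-2$ of $(v_h')^2$ on each element. Hence the blended rule integrates $(v_h')^2$ exactly and $\tilde a_h(v_h, v_h) = a(v_h, v_h) = |v_h|_{1,\Omega}^2$. Combined with Poincar\'{e}'s inequality on $H_0^1(\Omega)$, this yields the stated coercivity with $\tilde\alpha$ depending only on $\Omega$.

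For $\tilde b_h$, the integrand $v_h^2$ has degree $2p$ on each element, exceeding the exactness of every component rule; moreover the coefficients $C_{1,p}$, $C_{2,p}$ in \eqref{eq:blending} can greatly exceed unity in absolute value, so individual weights may turn negative and positivity is not inherited componentwise. I would pull the problem back to the reference element via the affine map $\sigma \colon \hat K \to K$ with constant Jacobian $\det(J_K)$. Both sides of the desired inequality scale identically,
\begin{equation*}
\mathcal{Q}_\tau(v_h^2) = \det(J_K)\, \hat{\mathcal{Q}}_\tau(\hat v_h^2), \qquad \int_K v_h^2 = \det(J_K) \int_{\hat K} \hat v_h^2,
\end{equation*}
so it suffices to produce a constant $\tilde\beta > 0$, depending only on $p$, with $\hat{\mathcal{Q}}_\tau(\hat v^2) \ge \tilde\beta \int_{\hat K} \hat v^2$ for every polynomial $\hat v$ of degree at most $p$ on $\hat K$. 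In any fixed basis of this space the claim becomes positive definiteness of a $(p+1) \times (p+1)$ symmetric matrix, and summing the resulting elementwise inequality over $\mathcal{T}_h$ yields $\tilde b_h(v_h, v_h) \ge \tilde\beta \|v_h\|_{0,\Omega}^2$. The tensor-product structure then transfers the bound to higher dimensions: the assembled mass is a Kronecker product $\tilde M_1 \otimes \cdots \otimes \tilde M_d$, and the assembled stiffness is a sum of terms such as $K_1 \otimes \tilde M_2$ (in 2D) in which the one-dimensional stiffness appears exactly; since each $K_i$ is positive semi-definite and $\tilde M_i \ge \tilde\beta M_i$ as positive operators, the bounds $\tilde M \ge \tilde\beta^d M$ and $\tilde K \ge \tilde\beta^{d-1} K$ follow.

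The main obstacle is precisely the positive-definiteness check of the reference-element matrix for each $p = 1, \ldots, 7$. There is no argument uniform in $p$: the optimal blending coefficients grow rapidly (of order $10^4$ for $p=7$) and must be balanced against the entries of the exact local mass. A useful equivalent formulation uses the interior-stencil parameters $(\alpha, \beta, \gamma, \ldots)$ from \eqref{eq:quadraticgf} and its higher-order analogues, after which positivity reduces to strict positivity of an explicit trigonometric symbol on $[0, \pi]$, verifiable for each $p$ by elementary one-variable estimates. A final small step checks that the Dirichlet boundary modifications of $V_h$ do not destroy the interior positivity; being a finite perturbation localized at each end of the mesh, this is handled by direct inspection.
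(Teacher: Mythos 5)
Your proposal is correct, but for the mass form it takes a genuinely different route from the paper. On the stiffness form the two arguments essentially coincide: every rule entering \eqref{eq:blending} is exact on $P_{2p-1}\supset P_{2p-2}\ni (v_h')^2$, so in one dimension $\tilde a_h$ agrees with $a$ on $V_h$ (the paper phrases this via Theorem 4.1.2 of \cite{ciarlet1978finite}); your Kronecker-product treatment of the multi-dimensional stiffness, which the blended rule does \emph{not} integrate exactly, makes explicit a point the paper leaves implicit. For the mass form the paper argues by perturbation: it exploits the fact that $G_{p+1}$, being exact to degree $2p+1$, integrates $v_h^2\in P_{2p}$ exactly --- note that your claim that $v_h^2$ exceeds ``the exactness of every component rule'' is false precisely for $G_{p+1}$, though nothing in your plan depends on it --- then writes $\tilde b_h(v_h,v_h)=\|v_h\|_{0,\Omega}^2+C_{2,p}(G_{p+1}-\mathcal{I})\circ\int_\Omega v_h^2+C_{2,p}(\mathcal{I}-G_p)\circ\int_\Omega v_h^2$ with $\mathcal{I}$ the exact integral, bounds the error terms via Lemma 3.1 of \cite{banerjee1989estimation} by $Ch^{2p+2}\|v_h\|_{p+1,\Omega}^2$ and $Ch^{2p}\|v_h\|_{p,\Omega}^2$, and absorbs them ``for sufficiently small $h$''. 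Your elementwise positivity argument buys something real here: since $\|v_h\|_{p,\Omega}$ scales like $h^{-p}\|v_h\|_{0,\Omega}$ for the most oscillatory splines (indeed $G_p$ annihilates the local Legendre mode $L_p$ entirely, a $100\%$ relative error), the paper's correction terms are only $O(1)\cdot\|v_h\|_{0,\Omega}^2$ after an inverse estimate; the absorption is therefore not a matter of $h$ being small but of an $h$-independent constant being less than one, which is exactly the quantitative statement your reference-element check supplies, with a $\tilde\beta$ valid on every mesh. Moreover, your ``main obstacle'' is lighter than you fear: writing $\hat v = cL_p + w$ with $w\in P_{p-1}$, exactness of the blended rule on $P_{2p-1}$ plus Legendre orthogonality reduce reference-element positive definiteness to the single scalar condition $\hat{\mathcal{Q}}_\tau(L_p^2)>0$; and since the nodes of $G_p$ are the roots of $L_p$, for the blending $G_{p+1}+C_{2,p}(G_{p+1}-G_p)$ this reads $(1+C_{2,p})\|L_p\|_{0,\hat K}^2>0$, i.e.\ $C_{2,p}>-1$, which holds for every scheme the paper lists (and since all optimal blendings for a given $p$ produce the same mass matrix, one check per $p$ suffices). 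I would drop your trigonometric-symbol variant: it requires controlling the non-Toeplitz boundary blocks uniformly in $N$, a complication the elementwise argument avoids altogether.
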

\begin{proof}

Without loss of generality, we consider the blending scheme $G_{p+1} + C_{2,p} \cdot (G_{p+1} - G_{p})$ for $p$-th order isogeometric analysis. Since $G_{p+1}$ and $G_{p}$ are exact for polynomial spaces of order $2p+1$ and $2p-1$, respectively, the blending scheme is exact for polynomial space of order $2p-1$. Hence, by Theorem 4.1.2 in \cite{ciarlet1978finite}, the bilinear form $\tilde a_h(\cdot, \cdot)$ is coercive.

Now, for $\tilde b_h(v_h, v_h)$, if the quadrature rule applied to the integral is $G_{p+1}$, then $\tilde b_h(v_h, v_h) = \| v_h \|^2_{0, \Omega}.$ In the following, we treat the quadrature rules as operators on integrals and let $\mathcal{I}$ be the identity operator. We calculate
\begin{equation*}
\begin{aligned}
\tilde b_h(v_h, v_h) & = \big( G_{p+1} + C_{2,p} \cdot (G_{p+1} - G_{p}) \big) \circ \int_\Omega v_h^2 \ \text{d} \boldsymbol{x}  \\
& = \big( G_{p+1} \big) \circ \int_\Omega v_h^2 \ \text{d} \boldsymbol{x}  + C_{2,p} \cdot \big( G_{p+1} - G_{p} \big) \circ \int_\Omega v_h^2 \ \text{d} \boldsymbol{x}  \\
& = \| v_h \|^2_{0, \Omega} + C_{2,p} \cdot \big( G_{p+1} - \mathcal{I} \big) \circ \int_\Omega v_h^2 \ \text{d} \boldsymbol{x} + C_{2,p} \cdot \big( \mathcal{I} - G_{p} \big) \circ \int_\Omega v_h^2 \ \text{d} \boldsymbol{x} \\
\end{aligned}
\end{equation*}

By Lemma 3.1 in \cite{banerjee1989estimation}, we have
\begin{equation*}
\begin{aligned}
 \Big| \big( G_{p+1} - \mathcal{I} \big) \circ \int_\Omega v_h^2 \ \text{d} \boldsymbol{x} \Big| & \le C  h^{2p+2} \| v_h \|^2_{p+1, \Omega}, \\
\Big| \big( \mathcal{I} - G_{p} \big) \circ \int_\Omega v_h^2 \ \text{d} \boldsymbol{x} \Big| & \le C  h^{2p} \| v_h \|^2_{p, \Omega},
\end{aligned}
\end{equation*}
where $C$ is a constant independent of $h$.

Thus, for sufficiently small $h$, we have a constant $\tilde \beta > 0$ such that 
\begin{equation*}
\tilde b_h(v_h, v_h) \ge \tilde \beta \| v_h \|^2_{0, \Omega},
\end{equation*}
which completes the proof.
\end{proof}

The boundedness of $\tilde b_h(\cdot, \cdot)$ is  a consequence of the proof. With coercivity, we immediately conclude that the eigenvalues of \eqref{eq:vfho} are positive. 
Before we establish the eigenfunction error estimate, we present the following inequality, which can be obtained by applying the Aubin-Nitsche Lemma (duality argument) on the discrete solution operator $T_h: L^2(\Omega) \to V_h $ defined as $\tilde a_h(T_h (\tilde u^h), v_h) =  \tilde\lambda^h \tilde b_h(\tilde u^h, v_h), \forall \ v_h \in V_h$ as in \eqref{eq:vfho}, c.f., \cite[Theorem 3.2.4]{ciarlet1978finite} or \cite[Section 2.3.4]{ern2013theory}.

\begin{lemma}\label{lem:inv}
Suppose $\Omega = [0, 1]^d \subset \mathbb{R}^d.$ For $u_j, \tilde u_j^h \in H^1_0(\Omega)$, there holds
\begin{equation} \label{eq:s}
\| u_j - \tilde u_j^h \|_{0,\Omega} \le C h | u_j - \tilde u_j^h |_{1,\Omega},
\end{equation}
where $C$ is a constant independent of $h$.
\end{lemma}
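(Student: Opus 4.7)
The plan is the classical Aubin-Nitsche duality argument, adapted to the quadrature-perturbed setting of \eqref{eq:vfho}. Set $e = u_j - \tilde u_j^h \in H^1_0(\Omega)$. Because $\Omega = [0,1]^d$ is a convex polytope, the dual problem
$$-\Delta \psi = e \ \text{ in } \Omega, \qquad \psi = 0 \ \text{ on } \partial \Omega,$$
has a unique solution $\psi \in H^1_0(\Omega) \cap H^2(\Omega)$ satisfying $\|\psi\|_{2,\Omega} \le C \|e\|_{0,\Omega}$ by elliptic regularity on convex domains. Integration by parts then yields the starting identity $\|e\|^2_{0,\Omega} = (e, -\Delta \psi) = a(e, \psi)$.

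Next, I would introduce a B-spline quasi-interpolant $\psi_h \in V_h$ of $\psi$ satisfying the standard approximation estimate $|\psi - \psi_h|_{1,\Omega} \le C h \|\psi\|_{2,\Omega}$, and split $a(e, \psi) = a(e, \psi - \psi_h) + a(e, \psi_h)$. The first summand is controlled directly by Cauchy-Schwarz together with the regularity bound,
$$|a(e, \psi - \psi_h)| \le |e|_{1,\Omega}\, |\psi - \psi_h|_{1,\Omega} \le C h |e|_{1,\Omega} \|e\|_{0,\Omega},$$
which already produces exactly the form demanded by \eqref{eq:s}. For the second summand I would combine the exact eigenvalue identity $a(u_j, \psi_h) = \lambda_j b(u_j, \psi_h)$ with the discrete one $\tilde a_h(\tilde u_j^h, \psi_h) = \tilde \lambda_j^h \tilde b_h(\tilde u_j^h, \psi_h)$ to obtain
$$a(e, \psi_h) = \lambda_j b(u_j, \psi_h) - \tilde \lambda_j^h \tilde b_h(\tilde u_j^h, \psi_h) + \big(\tilde a_h - a\big)(\tilde u_j^h, \psi_h).$$
Each of these terms would then be shown to be of strictly higher order in $h$ than $h|e|_{1,\Omega}\|e\|_{0,\Omega}$: the eigenvalue factor benefits from the superconvergence established in Theorem \ref{thm:eve}, while the consistency mismatches $\tilde a_h - a$ and $\tilde b_h - b$ fall under standard Bramble-Hilbert quadrature bounds of order $h^{2p}$ (cf.\ Lemma 3.1 of \cite{banerjee1989estimation}) applied to the blending rules \eqref{eq:blending} on the smooth integrand built from $\psi_h$ and the eigenfunction data.

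Dividing the resulting inequality by $\|e\|_{0,\Omega}$ produces the estimate. The main obstacle is the second step above: because $\tilde u_j^h$ is only a piecewise-polynomial discrete function, one has to lean on the coercivity and stability from Lemma \ref{lem:coe} to obtain a uniform $H^1$-bound on $\tilde u_j^h$, pair it with the regularity inherited by $\psi_h$ from $\psi$, and verify that every cross term carries at least one extra power of $h$ beyond what plain Cauchy-Schwarz would give. Once this quadrature-consistency bookkeeping is settled, the argument closes in precisely the standard Aubin-Nitsche fashion.
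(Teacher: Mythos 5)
Your route --- Aubin--Nitsche duality with the dual problem $-\Delta\psi = e$, $e = u_j - \tilde u_j^h$ --- is the same strategy the paper invokes; the paper in fact gives no detailed proof, only the remark preceding the lemma that it follows by applying the Aubin--Nitsche lemma to the discrete solution operator, with pointers to Ciarlet (Thm.~3.2.4) and Ern--Guermond (Sec.~2.3.4). Your setup (elliptic regularity on the convex domain, the identity $\|e\|_{0,\Omega}^2 = a(e,\psi)$, the bound $|a(e,\psi - \psi_h)| \le Ch|e|_{1,\Omega}\|e\|_{0,\Omega}$, and the algebraic identity for $a(e,\psi_h)$) is correct. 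The gap is in the one sentence that carries all the weight: the claim that every term in
\begin{equation*}
a(e,\psi_h) = \lambda_j b(u_j,\psi_h) - \tilde\lambda_j^h \tilde b_h(\tilde u_j^h,\psi_h) + \bigl(\tilde a_h - a\bigr)(\tilde u_j^h,\psi_h)
\end{equation*}
is of strictly higher order than $h|e|_{1,\Omega}\|e\|_{0,\Omega}$. Expanding the first difference gives
\begin{equation*}
\lambda_j b(e,\psi_h) + (\lambda_j - \tilde\lambda_j^h)\, b(\tilde u_j^h,\psi_h) + \tilde\lambda_j^h \bigl( b - \tilde b_h \bigr)(\tilde u_j^h,\psi_h).
\end{equation*}
The last two pieces are indeed controlled by Theorem \ref{thm:eve} and by the quadrature estimates \eqref{eq:al}, but the first piece, $\lambda_j b(e,\psi_h)$, is touched by neither: it contains no eigenvalue difference and no quadrature mismatch. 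The only bound available at this stage is $|\lambda_j b(e,\psi_h)| \le \lambda_j \|e\|_{0,\Omega}\|\psi_h\|_{0,\Omega} \le (\lambda_j/\lambda_1)(1 + Ch^2)\|e\|_{0,\Omega}^2$, which is of exactly the same order as the left-hand side $\|e\|_{0,\Omega}^2$, with a constant $\lambda_j/\lambda_1 > 1$ for every mode beyond the first; it can neither be absorbed into the left-hand side nor declared higher order. This is the structural difference between source and eigenvalue problems: in the source-problem Aubin--Nitsche argument, Galerkin orthogonality annihilates $a(e,\psi_h)$ altogether, and that is precisely what is missing here.

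Closing this hole requires an ingredient absent from your proposal: either a shifted dual problem $-\Delta\psi - \lambda_j\psi = e$ posed on the complement of the eigenspace (well-posed with a constant controlled by the spectral gap, which is where the $j$-dependence of $C$ really enters), or the Babu\v{s}ka--Osborn spectral-projection machinery, which is what ultimately shows that $e$ is nearly orthogonal to the low modes so that $b(e,\psi) = \|e\|_{-1,\Omega}^2 \ll \|e\|_{0,\Omega}^2$; proving that negative-norm statement is of the same difficulty as Lemma \ref{lem:inv} itself, so it cannot simply be asserted. Two further cautions: (i) since Lemma \ref{lem:inv} is an ingredient in the proof of Theorem \ref{thm:efe}, you may not measure ``higher order'' against the rates $|e|_{1,\Omega}\sim h^p$ and $\|e\|_{0,\Omega}\sim h^{p+1}$ --- that would be circular; every leftover term must be bounded by $Ch|e|_{1,\Omega}\|e\|_{0,\Omega}$ or absorbed without presuming convergence rates; (ii) the $h^{2p}$ bounds \eqref{eq:al} cannot be applied to $\psi_h$ at face value, because $\psi$ has only $H^2$ regularity on the cube, so $\|\psi_h\|_{p,\Omega}$ must be handled by inverse inequalities that give back powers of $h$. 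These are secondary to the main gap but would also need attention in a complete write-up.
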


\begin{theorem} \label{thm:efe}
For a fixed $j \geq 1$, assume that $u_j$ and $\tilde u_j^h$ are normalized, that is, $b(u_j, u_j)=1$ and $\tilde b_h(\tilde u_j^h, \tilde u_j^h) = 1$, and the signs of eigenfunctions of $u_j$ and $\tilde u_j^h$ are chosen such that $b(u_j, \tilde u_j^h) > 0$. Then for sufficiently small $h$, we have the estimate
\begin{equation}
| u_j - \tilde u_j^h |_{1,\Omega} \le C h^p,
\end{equation}
where $C$ is a constant independent of $h$.
\end{theorem}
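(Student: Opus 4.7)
The plan is to use the generalized Pythagorean identity of Theorem~\ref{thm:gpet} as the central identity and bound each term on its right-hand side. Since $a(v,v) = |v|_{1,\Omega}^2$, the energy norm $\|\cdot\|_E$ coincides with the $H^1$-seminorm, so the target estimate is equivalent to $\|u_j - \tilde u_j^h\|_E \le C h^p$. First, I would apply Theorem~\ref{thm:gpet} and rewrite it, using the normalization $\tilde b_h(\tilde u_j^h,\tilde u_j^h) = 1$, as
\begin{equation*}
\|u_j - \tilde u_j^h\|_E^2 = (\tilde\lambda_j^h - \lambda_j) + \lambda_j \|u_j - \tilde u_j^h\|_{0,\Omega}^2 + \mathcal{E}_a(\tilde u_j^h) + \lambda_j\,\mathcal{E}_b(\tilde u_j^h),
\end{equation*}
where $\mathcal{E}_a(v) = a(v,v) - \tilde a_h(v,v)$ and $\mathcal{E}_b(v) = b(v,v) - \tilde b_h(v,v)$ are the quadrature defects on the stiffness and mass forms.

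Next, I would bound each term. Term (i): by Theorem~\ref{thm:eve}, $|\tilde\lambda_j^h - \lambda_j| \le C h^{2p+2}$, so this term is of higher order than needed. Term (ii): by Lemma~\ref{lem:inv}, $\lambda_j\|u_j - \tilde u_j^h\|_{0,\Omega}^2 \le C h^2 |u_j - \tilde u_j^h|_{1,\Omega}^2 = C h^2 \|u_j - \tilde u_j^h\|_E^2$, so for $h$ small enough this term can be absorbed into the left-hand side. Term (iii): the optimal blending rules of Section~\ref{sec:ddr} use only $G_{p+1}$, $GL_{p+1}$, and $G_p$, all exact for polynomials of degree at least $2p-1$; since $|\nabla v_h|^2$ is an element-wise polynomial of degree $2p-2$, the stiffness form is integrated exactly and $\mathcal{E}_a(\tilde u_j^h) = 0$. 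Term (iv) is the essential one: the mass integrand $(\tilde u_j^h)^2$ is a piecewise polynomial of degree $2p$, one degree beyond exactness.

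Combining (i)--(iv), for $h$ sufficiently small I would obtain
\begin{equation*}
\|u_j - \tilde u_j^h\|_E^2 \le C h^{2p+2} + \tfrac12 \|u_j - \tilde u_j^h\|_E^2 + \lambda_j\,|\mathcal{E}_b(\tilde u_j^h)|,
\end{equation*}
whence $|u_j - \tilde u_j^h|_{1,\Omega} \le C h^p$ provided $|\mathcal{E}_b(\tilde u_j^h)| \le C h^{2p}$. To establish this bound I would work element-wise on the reference element $\hat K$: the quadrature error $\hat E_K$ vanishes on polynomials of degree $\le 2p-1$ and is a bounded linear functional on $H^{2p}(\hat K)$, so by the Bramble--Hilbert lemma $|\hat E_K(\hat v_h^2)| \le C |\hat v_h^2|_{2p,\hat K}$. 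Pulling back to $K$, applying the Leibniz rule, and using the standard inverse inequality $|v_h|_{p,K} \le C h_K^{1-p} \|\nabla v_h\|_{0,K}$, the element-wise error bound reads $|E_K(v_h^2)| \le C h_K^{2p} \|\nabla v_h\|_{0,K}^2$. Summing over $K\in\mathcal{T}_h$ gives $|\mathcal{E}_b(\tilde u_j^h)| \le C h^{2p} |\tilde u_j^h|_{1,\Omega}^2$, and the discrete $H^1$-seminorm is uniformly bounded by coercivity (Lemma~\ref{lem:coe}) together with $\tilde a_h(\tilde u_j^h,\tilde u_j^h) = \tilde\lambda_j^h$ and Theorem~\ref{thm:eve}.

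The hard step is (iv). The estimate must survive at the sharp exponent $2p$ with a constant independent of both $h$ and the discrete eigenfunction; this forces a careful bookkeeping of the scalings in the Leibniz--inverse-inequality chain so that the $p$-th order derivatives of $\tilde u_j^h$ are replaced by the $H^1$-seminorm at exactly the right power of $h$ and the estimate does not leak a factor that would destroy the rate. Once this is done, the absorption argument in step (ii) and the superconvergent eigenvalue bound in step (i) combine mechanically to yield $|u_j - \tilde u_j^h|_{1,\Omega} \le C h^p$.
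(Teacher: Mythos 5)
Your skeleton is the same as the paper's: the identity you quote from Theorem~\ref{thm:gpet} is exactly what the paper re-derives inside its proof of Theorem~\ref{thm:efe}, the absorption of $\lambda_j\|u_j-\tilde u_j^h\|_{0,\Omega}^2$ into the left-hand side via Lemma~\ref{lem:inv} is the paper's step, and the $Ch^{2p+2}$ bound from Theorem~\ref{thm:eve} enters identically (your sign convention on $\mathcal{E}_b$ differs harmlessly from \eqref{eq:gpet}). Your term (iii) is correct and even sharper than the paper: for the blends in \eqref{eq:blending}, every constituent rule is exact on $P_{2p-1}\supset P_{2p-2}$, so $\mathcal{E}_a(\tilde u_j^h)=0$, whereas the paper only bounds this term by $Ch^{2p}$ via \eqref{eq:al}. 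The proof breaks, however, at precisely the step you call the hard one: the claimed element-wise bound $|E_K(v_h^2)|\le Ch_K^{2p}\|\nabla v_h\|_{0,K}^2$ is false for $p\ge 2$. Take $v_h(x)=x^p$ on $K=[0,h_K]$: since the blended rule is exact on $P_{2p-1}$ but not on $P_{2p}$ (if it were exact on $P_{2p}$, the mass matrix would be integrated exactly and the scheme would coincide with standard Galerkin, contradicting the superconvergence of Theorem~\ref{thm:eve}), scaling gives $|E_K(v_h^2)|=c\,h_K^{2p+1}$ with $c\neq 0$, while $h_K^{2p}\|\nabla v_h\|_{0,K}^2=\tfrac{p^2}{2p-1}h_K^{4p-1}$; for $p\ge 2$ the left side dominates as $h_K\to 0$. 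The flaw is in your own chain: Bramble--Hilbert plus Leibniz correctly give $|E_K(v_h^2)|\le Ch_K^{2p}|v_h|_{p,K}^2$, but the inverse inequality $|v_h|_{p,K}\le Ch_K^{1-p}\|\nabla v_h\|_{0,K}$ then costs a factor $h_K^{2-2p}$, leaving only $|E_K(v_h^2)|\le Ch_K^{2}\|\nabla v_h\|_{0,K}^2$ (which is sharp). Feeding that into your final inequality yields $|u_j-\tilde u_j^h|_{1,\Omega}\le Ch$, which is the asserted rate only for $p=1$.

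The repair is to not descend to the $H^1$-seminorm: stop at $|\mathcal{E}_b(\tilde u_j^h)|\le Ch^{2p}|\tilde u_j^h|_{p,\Omega}^2$ (broken seminorm) and then establish that $\|\tilde u_j^h\|_{p,\Omega}$ is bounded uniformly in $h$. That uniform bound is the genuinely nontrivial ingredient, and it cannot come from coercivity (Lemma~\ref{lem:coe}), which controls only the $H^1$-level quantities. This is exactly what the paper's proof imports wholesale through the Banerjee--Osborn-type estimate \eqref{eq:al}, $|b(v,w)-\tilde b_h(v,w)|\le Ch^{2p}\|v\|_{p,\Omega}\|w\|_{p,\Omega}$, after which it silently absorbs the factor $\|\tilde u_j^h\|_{p,\Omega}^2$ into the constant --- a step justified in \cite{banerjee1989estimation,solov2013approximation,solov2015finite}, e.g.\ by comparing $\tilde u_j^h$ with an interpolant of the smooth eigenfunction $u_j$ within the spectral approximation framework. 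So: same decomposition as the paper, a valid sharpening of the stiffness term, but the mass-quadrature term needs the $H^p$-norm bookkeeping rather than an inverse-inequality reduction to $H^1$, and the missing uniform $H^p$ bound on the discrete eigenfunction is where the real work (or citation) must go.
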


\begin{proof}
With the normalization $b(u_j, u_j)=1$ and $\tilde b_h(\tilde u_j^h, \tilde u_j^h) = 1$, by \eqref{eq:vf} and \eqref{eq:vfho} we have
\begin{equation}
a(u_j, u_j) = \lambda_j b(u_j, u_j) = \lambda_j, \qquad \tilde a_h(\tilde u_j^h, \tilde u_j^h) = \tilde \lambda_j^h \tilde b_h(\tilde u_j^h, \tilde u_j^h) = \tilde \lambda_j^h.
\end{equation}

Analogously to the derivations described in  \cite{banerjee1989estimation,solov2013approximation, solov2015finite}, we  estimate the modified bilinear forms, that is, for $v, w \in V_h$ there holds
\begin{equation} \label{eq:al}
\begin{aligned}
| a(v, w) - \tilde a_h(v, w) | & \le C h^{2p} \| v \|_{p,\Omega} \| w \|_{p,\Omega}, \\
| b(v, w) - \tilde b_h(v, w) | & \le C h^{2p} \| v \|_{p,\Omega} \| w \|_{p,\Omega}. \\
\end{aligned}
\end{equation}
Thus, for a fixed $j$, by definition and using \eqref{eq:vf} and \eqref{eq:vfho}, we obtain 
\begin{equation}
\begin{aligned}
| u_j - \tilde u_j^h |_{1,\Omega}^2 & = a(u_j - \tilde u_j^h, u_j - \tilde u_j^h) \\
& = a(u_j, u_j) - 2 a(u_j, \tilde u_j^h) + a(\tilde u_j^h, \tilde u_j^h) \\
& = \lambda_j - 2 \lambda_j b(u_j, \tilde u_j^h) + \tilde \lambda_j^h  + \big( a(\tilde u_j^h, \tilde u_j^h) - \tilde a_h(\tilde u_j^h, \tilde u_j^h) \big) \\
& = \lambda_j \big( 2 - 2 b(u_j, \tilde u_j^h) \big) + \tilde \lambda_j^h - \lambda_j 
+ \big( a(\tilde u_j^h, \tilde u_j^h) - \tilde a_h(\tilde u_j^h, \tilde u_j^h) \big) \\
& = \lambda_j \| u_j - \tilde u_j^h \|_{0,\Omega}^2 + \tilde \lambda_j^h - \lambda_j \\
& \quad + \big( a(\tilde u_j^h, \tilde u_j^h) - \tilde a_h(\tilde u_j^h, \tilde u_j^h) \big)
 + \lambda_j \big( \tilde b_h(\tilde u_j^h, \tilde u_j^h) - b(\tilde u_j^h, \tilde u_j^h) \big).
\end{aligned}
\end{equation}
Rearranging terms and taking absolute value  yields
\begin{equation} \label{eq:4.8}
\begin{aligned}
\Big| | u_j - \tilde u_j^h  |_{1,\Omega}^2 -  \lambda_j \| u_j - \tilde u_j^h \|_{0,\Omega}^2 \Big|  & = \Big| \tilde \lambda_j^h - \lambda_j  +
 \big( a(\tilde u_j^h, \tilde u_j^h) - \tilde a_h(\tilde u_j^h, \tilde u_j^h) \big) \\
& \quad + \lambda_j \big( \tilde b_h(\tilde u_j^h, \tilde u_j^h) - b(\tilde u_j^h, \tilde u_j^h) \big) \Big| \\
& \le | \tilde \lambda_j^h - \lambda_j | + 
 | a(\tilde u_j^h, \tilde u_j^h) - \tilde a_h(\tilde u_j^h, \tilde u_j^h)  |  \\
& \quad + \lambda_j |  \tilde b_h(\tilde u_j^h, \tilde u_j^h) - b(\tilde u_j^h, \tilde u_j^h)  |. \\
\end{aligned} 
\end{equation}
By applying Lemma \ref{lem:inv}, Theorem \ref{thm:eve}, \eqref{eq:al}, and \eqref{eq:4.8}, for sufficiently small $h$, we have a constant $\tilde C > 0$ such that

\begin{equation}
\begin{aligned}
\tilde C | u_j - \tilde u_j^h |_{1,\Omega}^2 & \le (1 - \lambda_j C^2h^2) | u_j - \tilde u_j^h |_{1,\Omega}^2 \\
& \le | u_j - \tilde u_j^h |_{1,\Omega}^2 -  \lambda_j \| u_j - \tilde u_j^h \|_{0,\Omega}^2  \\
& \le \Big| | u_j - \tilde u_j^h |_{1,\Omega}^2 -  \lambda_j \| u_j - \tilde u_j^h \|_{0,\Omega}^2 \Big| \\ 
&\le | \tilde \lambda_j^h - \lambda_j |  + C h^{2p} \\
& \le C h^{2p+2} + C h^{2p} \le C h^{2p}.
\end{aligned}
\end{equation}

Taking the square root on both sides completes the proof.
\end{proof}

\section{Numerical experiments} \label{sec:num}
In this section, we present various numerical examples to illustrate the performance of the optimally-blended rules.
We focus on isogeometric analysis and study the convergence of the eigenvalue (EV) and eigenfunction (EF) errors in one and two dimensions. Three or higher dimensions are simple extensions as discussed in Section \ref{sec:md}. For comparisons with finite element approximations, we refer to \cite{cottrell2006isogeometric,hughes2014finite,puzyrev2017dispersion} where the authors showed that the isogeometric analysis outperforms finite element approximations. 

\begin{figure}[ht]
\centering
\includegraphics[width=12cm]{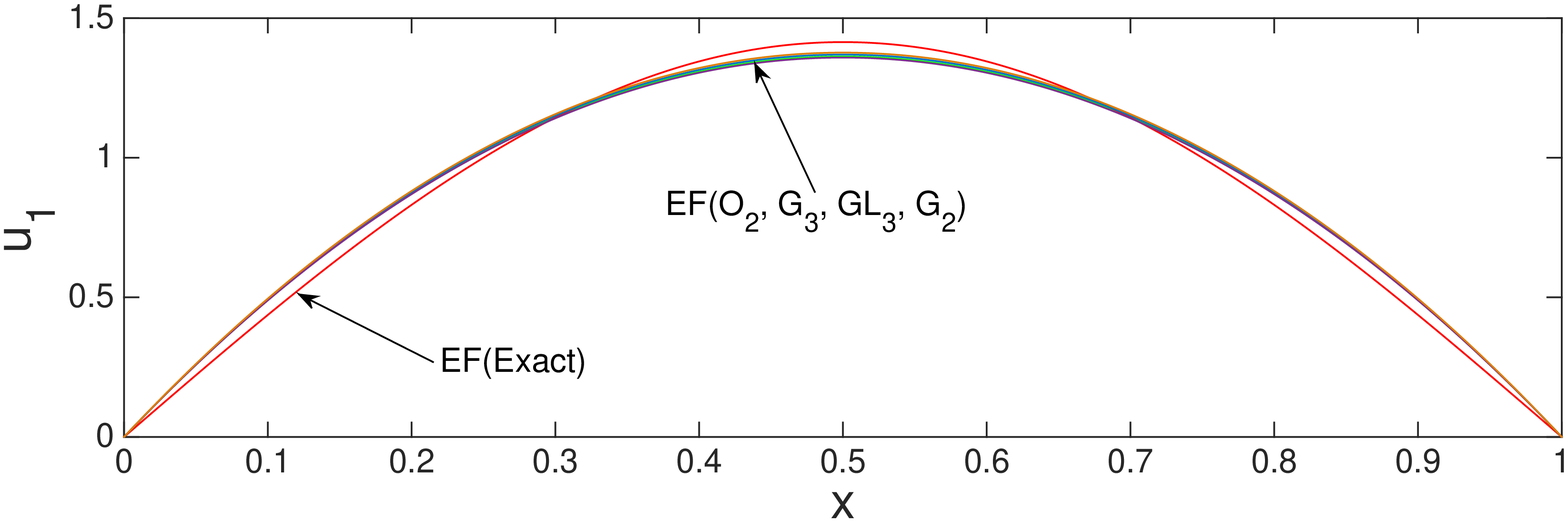} 
\includegraphics[width=12cm]{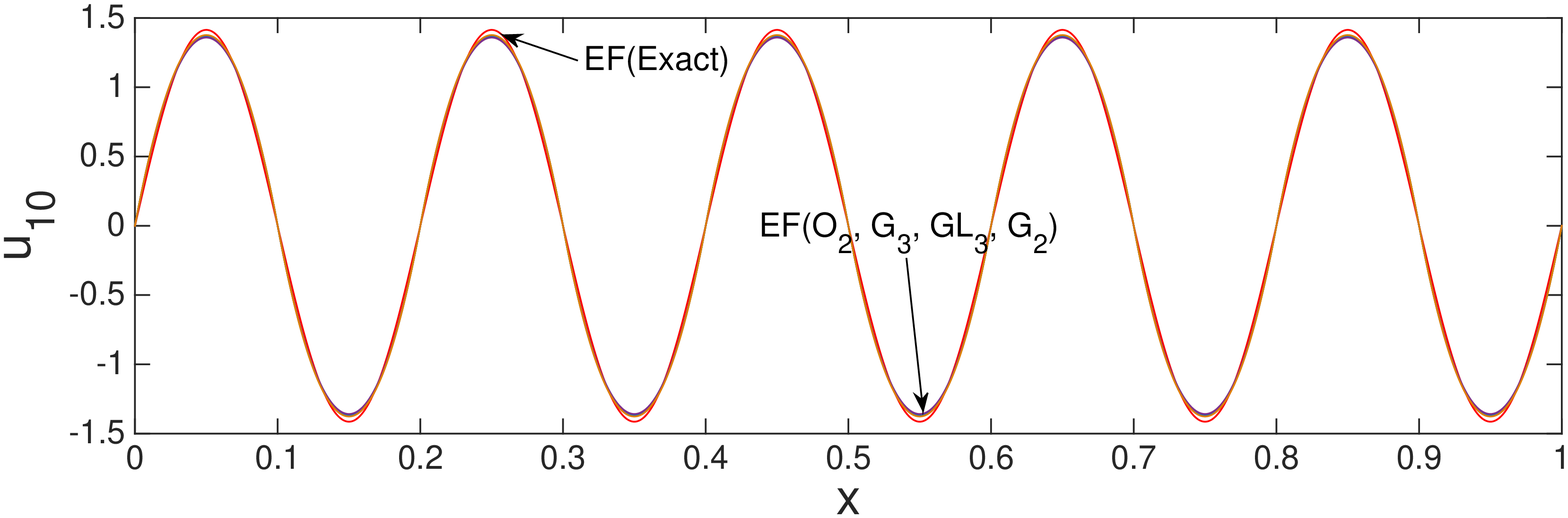}
\caption{Eigenfunctions $u_1$ and $u_{10}$ when using $C^1$ quadratic isogeometric analysis with quadrature rules $G_3, GL_3, G_2$, and $O_2$.}
\label{fig:ef1d2iga}
\includegraphics[width=12cm]{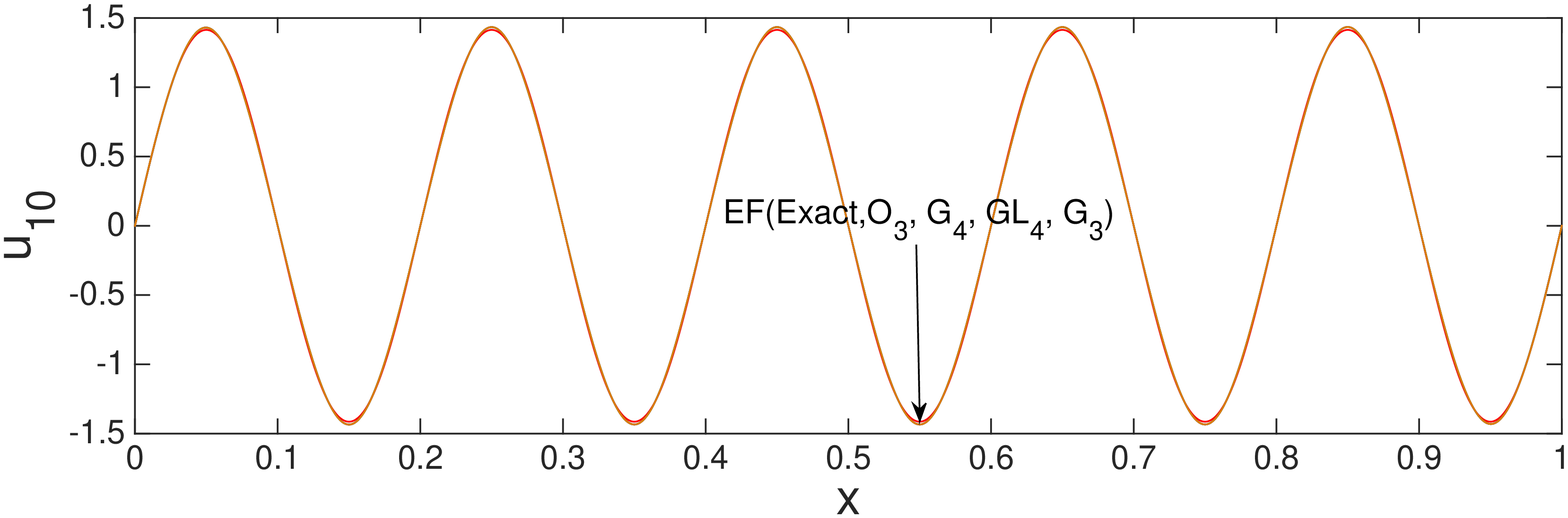}
\caption{Eigenfunction $u_{10}$ when using $C^2$ cubic isogeometric analysis with quadrature rules $G_4, GL_4, G_3$, and $O_3$.}
\label{fig:ef1d3iga}
\end{figure}

\subsection{Convergence study in one dimension}
We consider the classic one-dimensional second-order elliptic eigenvalue problem \eqref{eq:pdee} with eigenvalues and eigenfunctions 
$
\lambda_j = \pi^2 j^2, u_j = \sqrt{2} \sin(j \pi x), j = 1, 2, \cdots.
$
This is the same example studied in \cite{hughes2014finite,puzyrev2017dispersion} among many others. We assume that once the eigenvalue problem is solved, the numerical eigenvalues $\lambda_j^h$ and $\tilde \lambda_j^h$ are sorted in ascending order and paired with the true eigenvalues $\lambda_j$.

First of all, for $p$-th order isogeometric analysis with maximum continuity, that is $C^{p-1}$, the quadrature rule $G_{p+1}$ approximates both stiffness and mass matrices exactly while $GL_{p+1}$, $G_p$, and $O_p$ integrate the stiffness matrices exactly but under-integrate the mass matrices. Despite these differences, all of them lead to accurate approximations to the eigenfunctions. Figure \ref{fig:ef1d2iga} shows the plots of the numerical approximations of eigenfunctions $u_1$ and $u_{10}$ using $C^1$ quadratic isogeometric analysis with two elements for $u_1$ and twenty elements for $u_{10}$. As a comparison, Figure \ref{fig:ef1d3iga} shows the numerical approximation of $u_{10}$ using $C^2$ cubic isogeometric analysis with twenty elements.

\subsubsection{Eigenvalue errors} 
The analysis of eigenvalue errors done in Section \ref{sec:evana} is verified numerically in this subsection. 
The optimally-blended quadrature rules proposed for isogeometric analysis of eigenvalue problem \eqref{eq:pdee} in Section \ref{sec:ddr} yield two additional orders of eigenvalue error convergence.

\begin{figure}[ht]
\centering
\includegraphics[height=5.0cm]{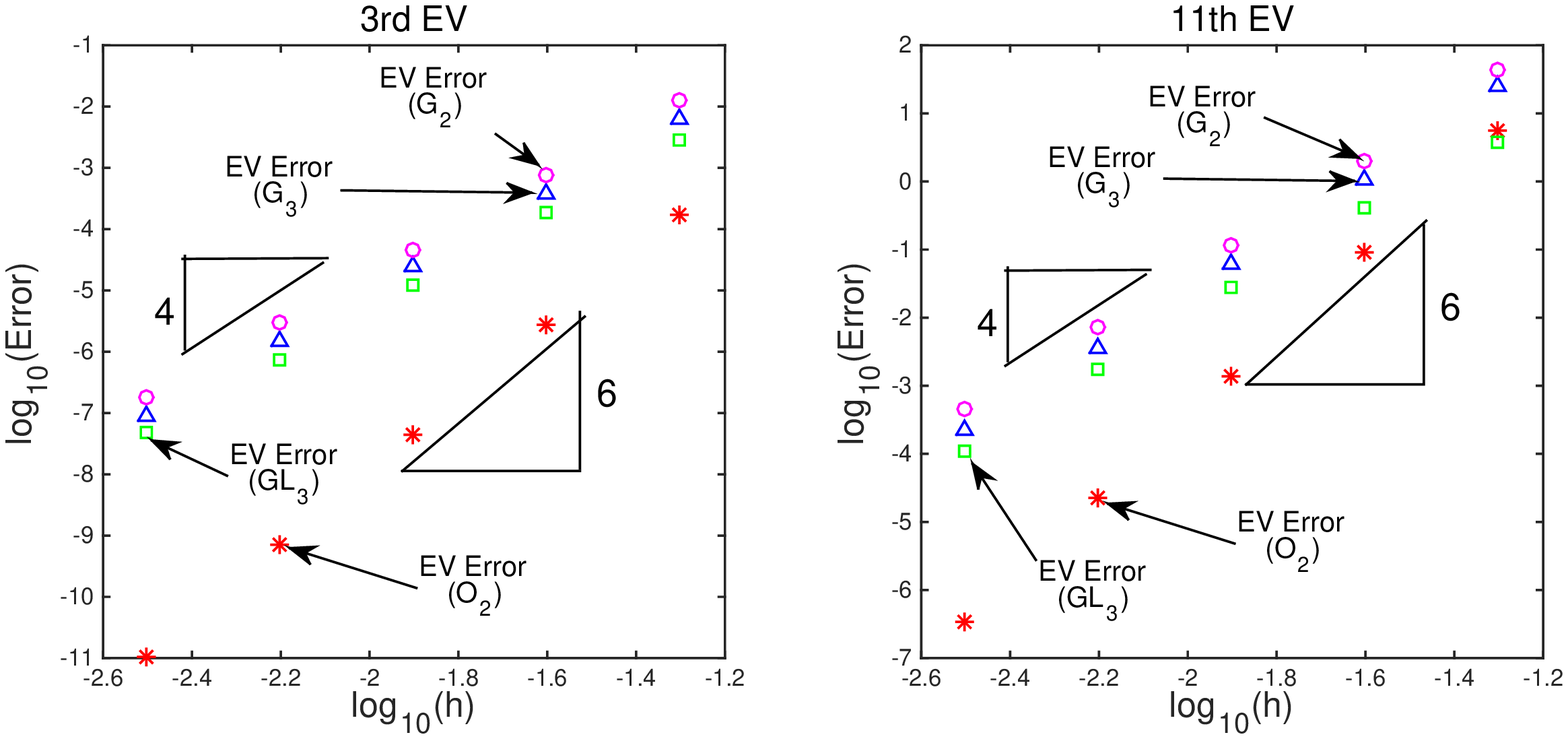} 
\caption{Eigenvalue errors for $\lambda_3$ and $\lambda_{11}$ when using $C^1$ quadratic isogeometric analysis with quadrature rules $G_3, GL_3, G_2$, and $O_2$.}
\label{fig:everr1d2o2iga}
\includegraphics[height=5.0cm]{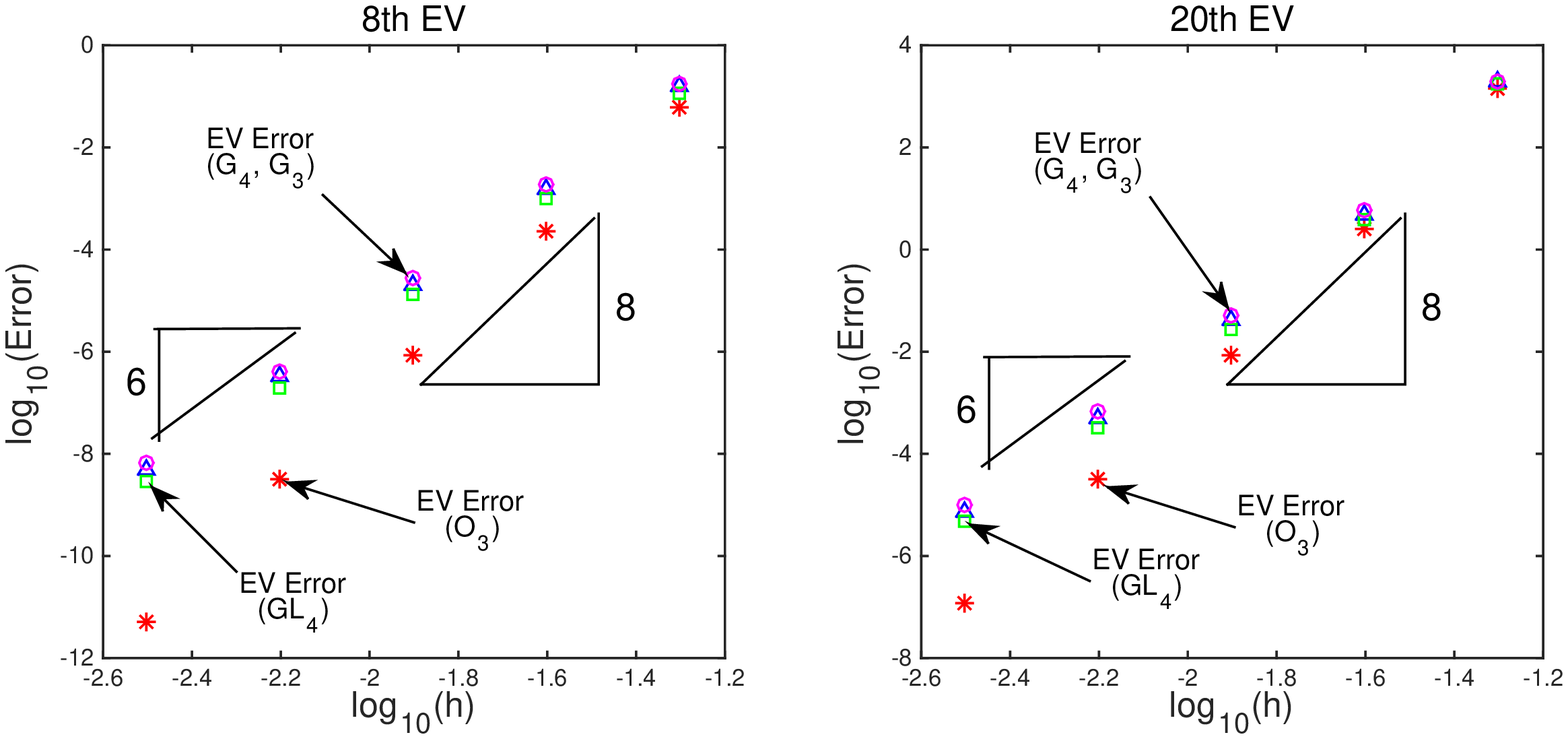} 
\caption{Eigenvalue errors for  $\lambda_8$ and $\lambda_{20}$ when using $C^2$ cubic isogeometric analysis with quadrature rules $G_4, GL_4, G_3$, and $O_3$.}
\label{fig:everr1d3o3iga}
\end{figure}

Figure \ref{fig:everr1d2o2iga} shows the eigenvalue errors when using $C^1$ quadratic isogeometric elements. The domain $\overline\Omega=[0, 1]$ is discretized uniformly with 20, 40, 80, 160, and 320 elements.
The figure shows two extra orders of convergence in the error for the eigenvalues $\lambda_3$ and $\lambda_{11}$. With both stiffness and mass matrices integrated exactly by $G_3$, the eigenvalue errors converge at the rate of $h^4$, while a slight modification of the quadrature rule, which is easily realized by optimally-blended quadrature rule $O_2$, leads to a convergence rate of $h^6$.

\begin{figure}[ht]
\centering
\includegraphics[height=5.0cm]{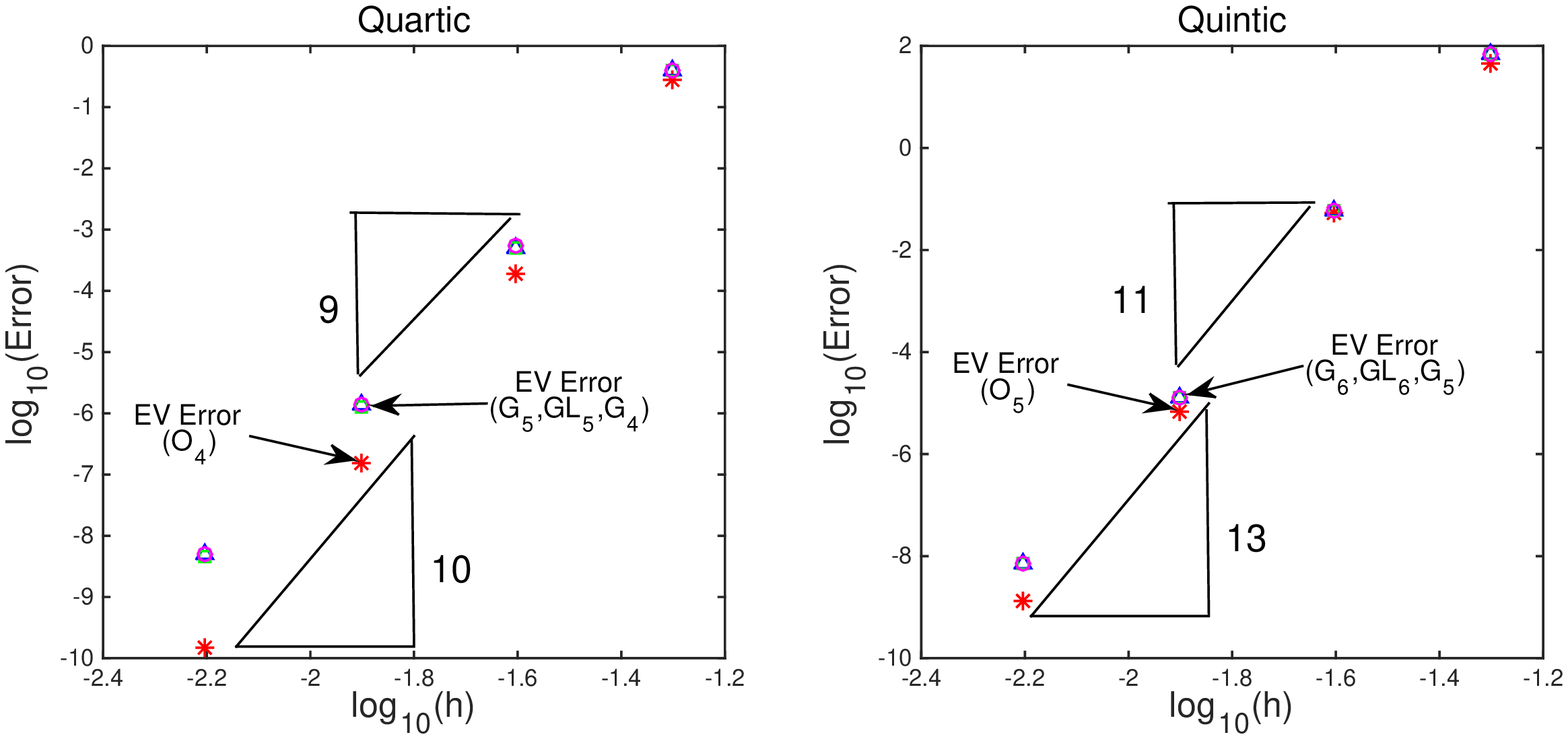} 
\caption{Eigenvalue errors for  $\lambda_{11}$ when using quadrature rules $G_5, GL_5, G_4$, and $O_4$ for $C^3$ quartic isogeometric analysis (left) and $G_6, GL_6, G_5$, and $O_5$ for $C^4$ quintic isogeometric analysis (right).}
\label{fig:everr1d45o45iga}
\end{figure}

\begin{figure}[ht]
\centering
\includegraphics[height=5.0cm]{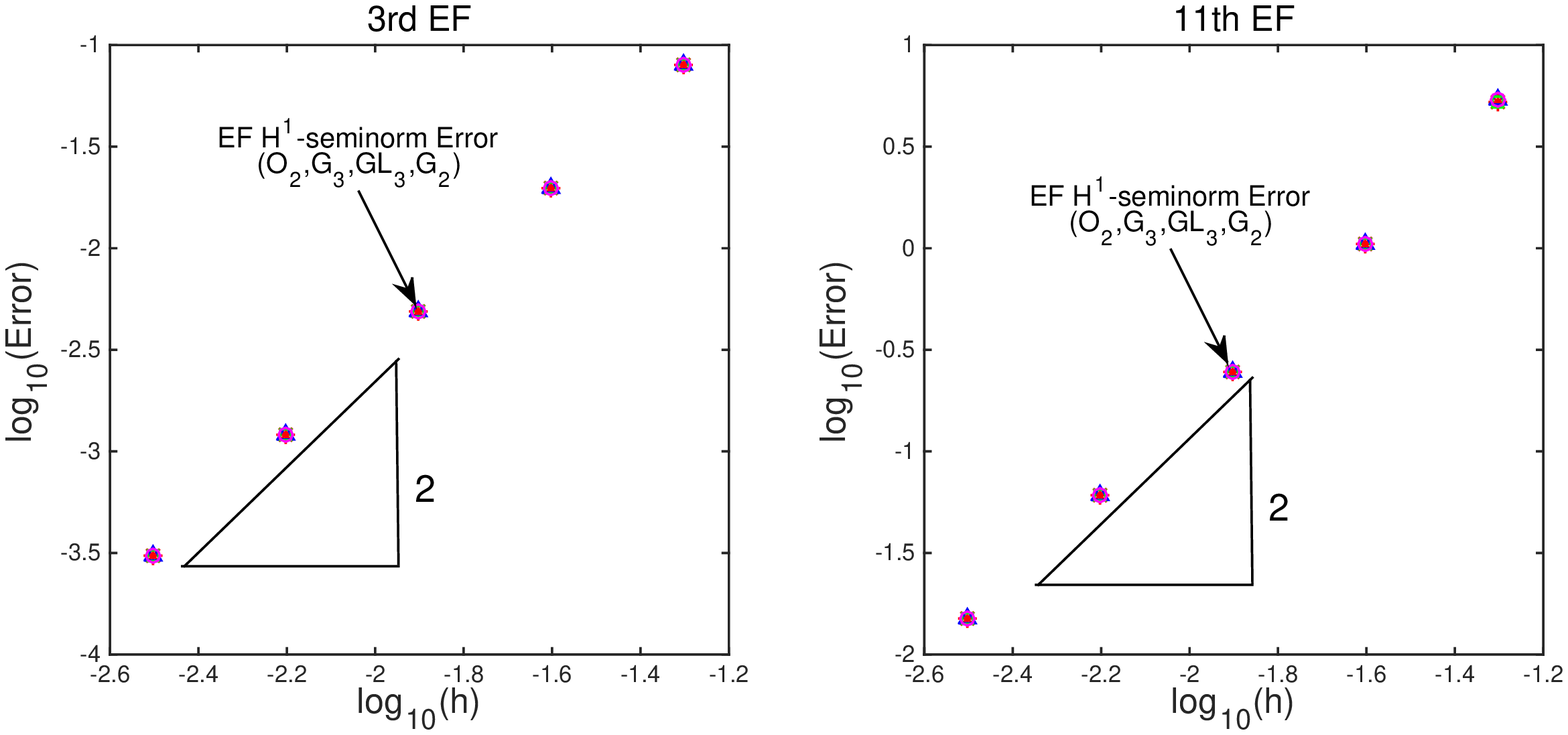} 
\caption{Eigenfunction $H^1$-seminorm errors for $u_3$ and $u_{11}$ when using $C^1$ quadratic isogeometric analysis with quadrature rules $G_3, GL_3, G_2$, and $O_2$.}
\label{fig:eferr1d2igah1}
\includegraphics[height=5.0cm]{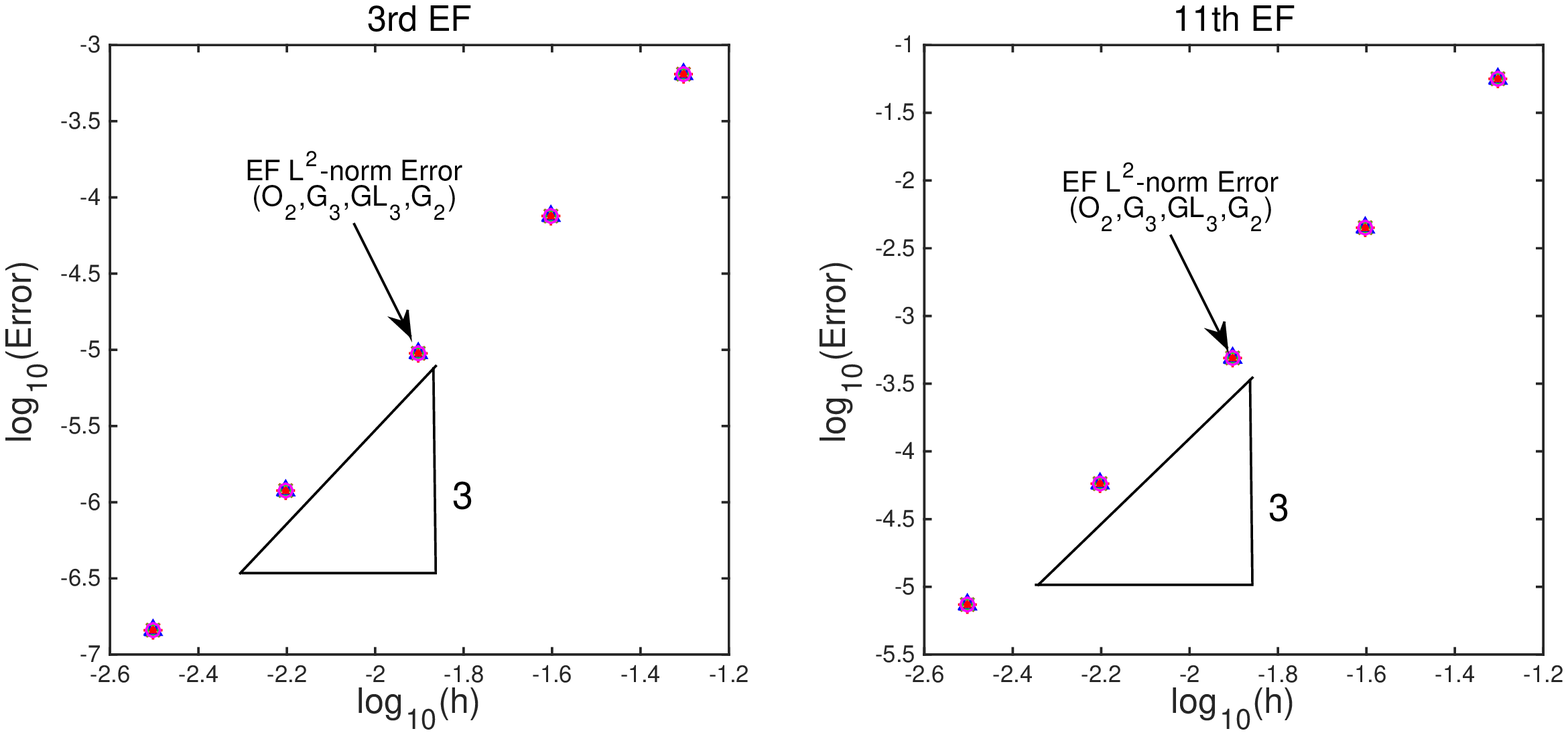} 
\caption{Eigenfunction $L^2$-norm errors for $u_3$ and $u_{11}$ when using $C^1$ quadratic isogeometric analysis with quadrature rules $G_3, GL_3, G_2$, and $O_2$.}
\label{fig:eferr1d2igal2}
\end{figure}

Further inspection of Figure \ref{fig:everr1d2o2iga} reveals that the eigenvalue errors are generally smaller in the case of $GL_3$ than those of $G_3$ and smaller for $G_3$ than for $G_2$. 
This observation confirms the theoretical analysis we present in Section 3 . The leading order terms in the Taylor expansions of the dispersion error \eqref{eq:quadraticgl3dre} for $GL_3$, \eqref{eq:quadraticg3dre} for $G_3$, and \eqref{eq:quadraticg2dre} for $G_2$ have absolute values of 1/2880, 1/1440, and 1/720, respectively. 
Figure \ref{fig:everr1d3o3iga} shows the results when using $C^2$ cubic isogeometric analysis while Figure \ref{fig:everr1d45o45iga} shows these when using $C^3$ quartic and  $C^4$ quintic isogeometric analysis. A slight extra-superconvergence is observed in Figure \ref{fig:everr1d45o45iga} for quartic and quintic isogeometric elements in coarse meshes. These numerical results confirm the eigenvalue error analysis we discuss in Section \ref{sec:evana}.

\subsubsection{Eigenfunction errors} 
In this subsection, we verify numerically the analysis of Section \ref{sec:efana}.
While Figures \ref{fig:ef1d2iga} and  \ref{fig:ef1d3iga} show several sample plots of approximate and exact eigenfunctions, we show in this subsection convergence rates of the eigenfunction errors.

As the analysis predicts, the error in the eigenfunctions for the optimally-blended schemes does not exhibit extra orders of superconvergence. For the $H^1$-seminorm, all schemes yield a convergence of order $p$ for $p$-th order isogeometric elements with maximum continuity $C^{p-1}$ at element interfaces.

\begin{table}[ht]
\centering 
\begin{tabular}{| c | c | c | c | c |}
\hline
$N$ & $O_2$ & $G_{3}$ & $GL_{3}$  & $G_2$ \\[0.1cm] \hline
20 & 8.007409E-02 & 8.007620E-02 & 8.007355E-02 & 8.007966E-02 \\[0.1cm] 
40 & 1.962886E-02 & 1.962889E-02 & 1.962885E-02 & 1.962894E-02 \\[0.1cm] 
80 & 4.882983E-03 & 4.882983E-03 & 4.882983E-03 & 4.882984E-03 \\[0.1cm] 
160 & 1.219233E-03 & 1.219233E-03 & 1.219233E-03 & 1.219233E-03 \\[0.1cm] 
320 & 3.047138E-04 & 3.047138E-04 & 3.047138E-04 & 3.047138E-04 \\[0.1cm] 
Order& 2.01 & 2.01 & 2.01 & 2.01 \\ \hline
\end{tabular}
\caption{Eigenfunction $H^1$-seminorm errors for $u_3$ when using $C^1$ quadratic isogeometric analysis with different quadrature rules.}
\label{tab:efe2iga}
\end{table}

Figure \ref{fig:eferr1d2igah1} shows the $H^1$-seminorm errors of the eigenfunctions $u_3$ and $u_{11}$ for $C^1$  quadratic isogeometric analysis with quadrature rules $G_3, GL_3, G_2$, and $O_2$. The error convergence rate is two and the differences in the errors are fairly small. More precisely, Table \ref{tab:efe2iga} shows the errors for $u_3$. These numbers are the data for the left plot in Figure \ref{fig:eferr1d2igah1}. Their differences are in the order of $10^{-6}$ for the case with mesh size $1/20$ and of a scale of $10^{-7}$ for the case with mesh size $1/40$. As a consequence, they have the same convergence rates. Similar results are  observed for other eigenfunctions and details are omitted here.

For completeness, Figure \ref{fig:eferr1d2igal2} shows the eigenfunction errors in $L^2$-norm for $C^1$ quadratic case and Figure \ref{fig:eferr1d345iga} for higher order cases. Using Lemma \ref{lem:inv} and Theorem \ref{thm:efe}, we obtain an estimation for eigenfunction errors in $L^2$-norm
$
\| u_j - \tilde u_j^h \|_{0,\Omega} \le Ch^{p+1}
$ (see also \eqref{eq:efl2} in the \ref{sec:ee}).
All the schemes yield errors which are of order $p+1$ for the $p$-th order isogeometric analysis of continuity $C^{p-1}$. These errors have the optimal convergence orders. We observe a superconvergent rate of eigenfunction error in $L^2$-norm for quintic elements on coarse meshes. Again, the differences in errors when different quadrature rules are utilized are small.
These numerical results confirm the eigenfunction error analysis in Section \ref{sec:efana}.

\begin{figure}[ht]
\centering
\includegraphics[height=5.0cm]{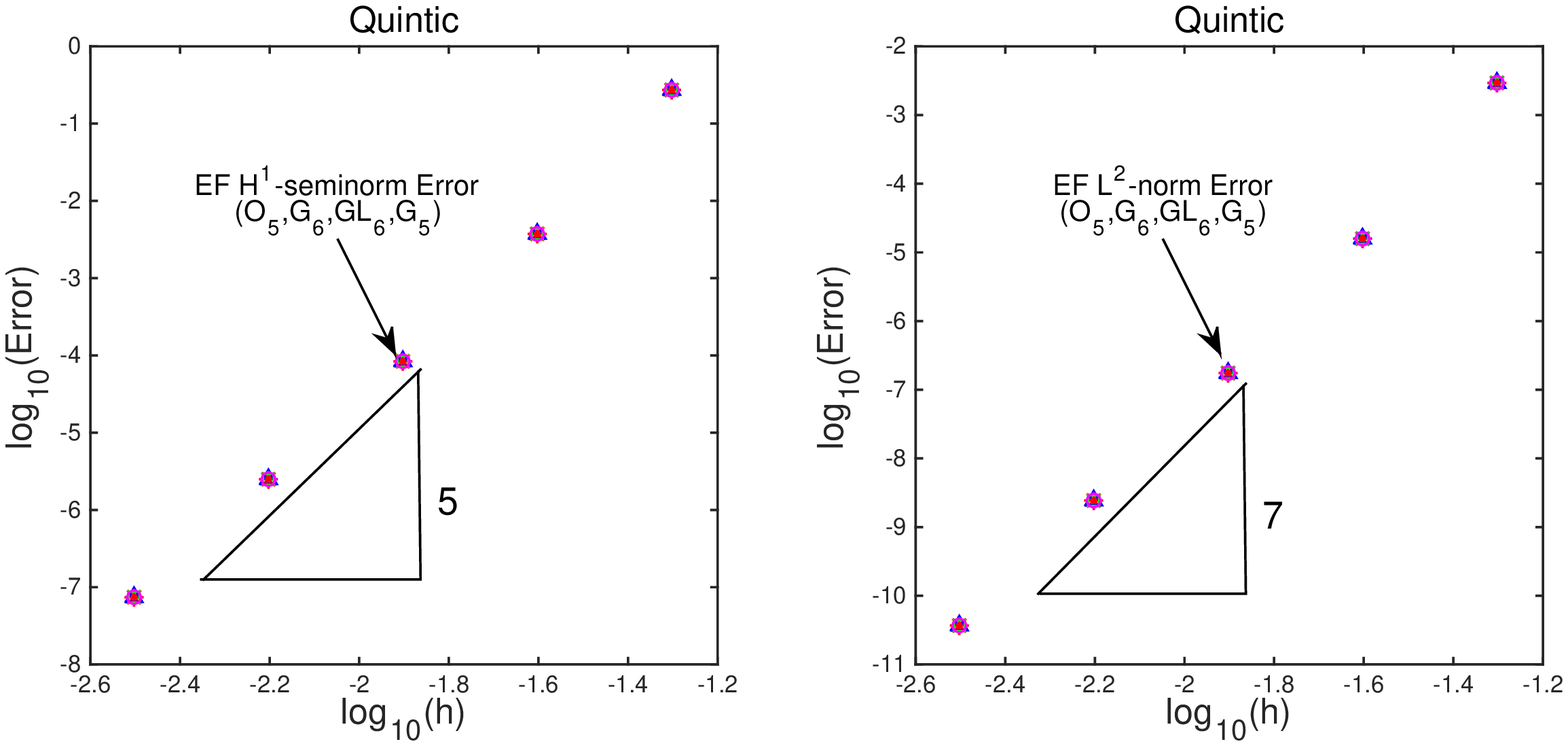}
\caption{Eigenfunction $H^1$-seminorm (left) and $L^2$-norm (right) errors for $u_{11}$ when using $C^4$ quintic  isogeometric analysis with quadrature rules $G_{6}, GL_{6}, G_5$, and $O_5$.}
\label{fig:eferr1d345iga}
\end{figure}

\subsection{Convergence study in two dimensions} The properties in two and higher dimensions are simple extensions of one dimensional cases when using tensor product meshes \cite{gao2013kronecker,gao2014fast}. Now, we consider the two-dimensional problem \eqref{eq:pdee} with eigenvalues and eigenfunctions 
$
\lambda_{jk} = \pi^2 (j^2 + k^2), u_{jk} = 2 \sin(j \pi x) \sin(k \pi y), j, k = 1, 2, \cdots.
$ 
Again, we assume that once we solve the eigenvalue problem, the numerical eigenvalues $\lambda_j^h$ and $\tilde \lambda_j^h$ are sorted in ascending order and paired with the true eigenvalues $\lambda_j$.
\begin{figure}[ht]
\centering
\includegraphics[height=5.0cm]{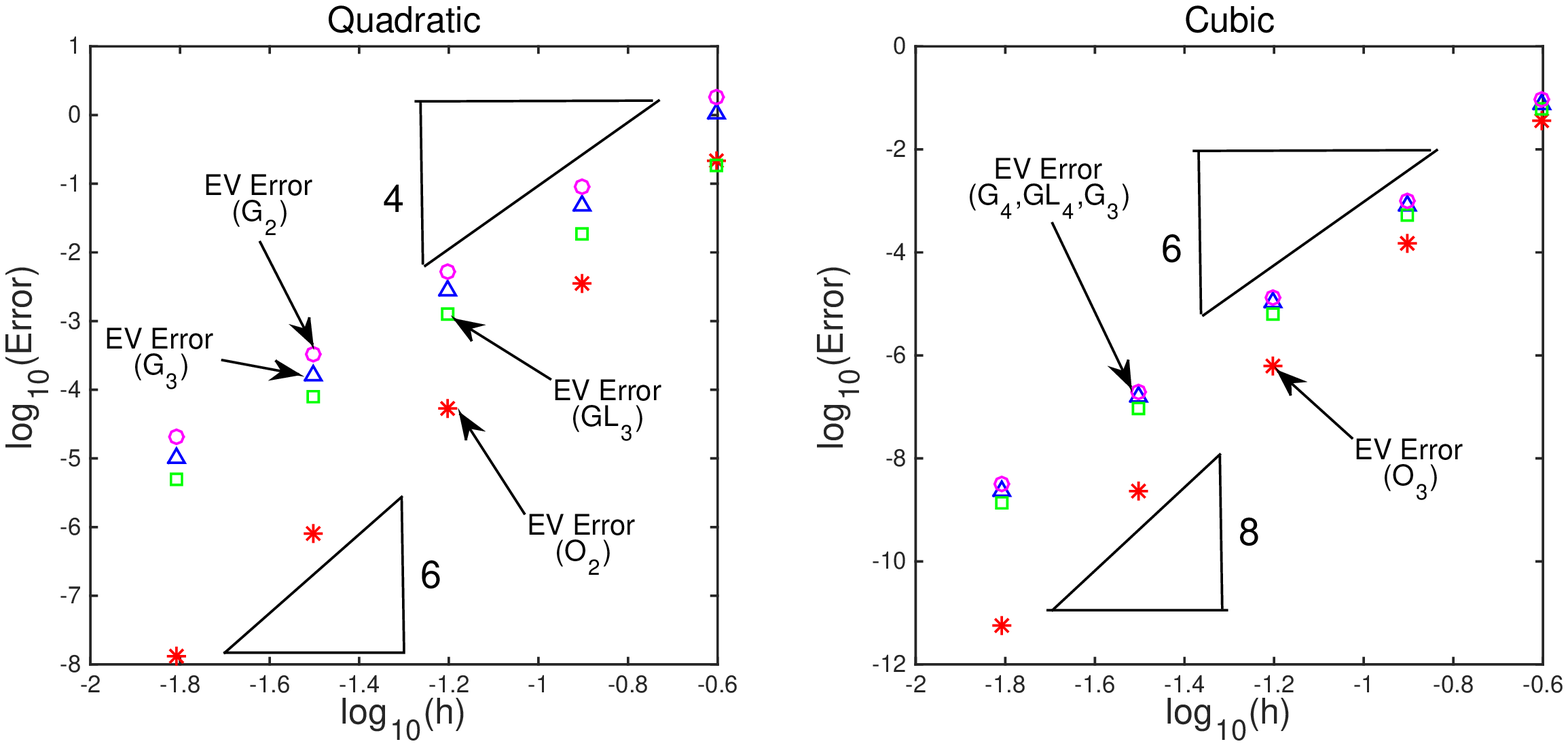} \\
\caption{Eigenvalue errors for $\lambda_{2,2}$ in 2D when using quadrature rules for $C^1$ quadratic and $C^2$ cubic isogeometric analysis.}
\label{fig:everr2d234iga}
\end{figure}

We focus on studying the convergence behavior when using the optimal blending rules. Figure \ref{fig:everr2d234iga} shows the eigenvalue errors for $\lambda_{2,2}$ when using quadrature rules for $C^1$ quadratic and $C^2$ cubic isogeometric elements. The mesh configurations for quadratic and cubic cases are $4\times 4, 8\times 8, 16\times 16, 32\times 32,$ and $64 \times 64$.
The eigenvalue errors of the optimal blending schemes are of order $2p+2$, while the standard quadratures yield order $2p$. These observations verify the analysis we discussed in Section \ref{sec:ea}.

\begin{table}[ht]
\centering 
\begin{tabular}{ | c | cc | cc | cc | cc |}
\hline
 & \multicolumn{2}{c|}{$p=1$} & \multicolumn{2}{c|}{$p=2$} & \multicolumn{2}{c|}{$p=3$} \\[0.1cm] 
$N$  & $\lambda_1$ &  $\lambda_4$ & $\lambda_1$ &  $\lambda_4$ & $\lambda_1$ &  $\lambda_4$  \\[0.1cm] \hline
 5&	1.00119&	0.88632&	0.96997&	0.67227&	0.92794&	0.64385\\[0.1cm]
10&	1.00040&	1.00127&	0.99231&	0.89264&	0.97974&	0.75699\\[0.1cm]
20&	1.00011&	1.00192&	0.99806&	0.97046&	0.99468&	0.92242\\[0.1cm]
40&	1.00003&	1.00057&	0.99952&	0.99242&	1.00223&	0.97899\\[0.1cm]
Order &1.83&	2.28&	1.96&	1.82&	1.7&	1.39 \\[0.1cm]\hline
\end{tabular}
\caption{Effectivity index (EI) of the error estimator $R(\tilde \lambda_j^h, \tilde u^h_j)$ and the convergence order of EI to 1.}
\label{tab:ee} 
\end{table}

\subsection{An error estimator}
We derive an error estimator in \ref{sec:ee} and study the numerical performance. The effectivity index of the error estimator $R(\tilde \lambda_j^h, \tilde u^h_j)$ is defined in \eqref{eq:ei}. Table \ref{tab:ee} shows the effectivity indices of the estimations of the first and the fourth eigenfunction errors in energy-norm when using isogeomtric elements with $p=1,2,3$ in one dimension. The domain $\overline\Omega=[0,1]$ is discretized uniformly with $ N = 5,10,20,40$ elements. Table \ref{tab:ee} shows that the effectivity indices are close to one and they converge to 1 at an almost quadratic rate, which confirms the discussion in \ref{sec:ee}.

\subsection{Computational efficiency} We now present the computational cost (time) while using the optimally-blended quadrature rules with a comparison to the case while using standard quadrature rules.

\begin{table}[ht]
\centering
\begin{tabular}{|c| c c| c c|}
\hline
 & \multicolumn{2}{c|}{Gauss rule} & \multicolumn{2}{c|}{Optimal rule} \\
 $p$ & Assembly time & Total time & Assembly time & Total time \\ \hline
2 & 1.10   & 41.54  & 1.24   & 41.18 \\
3 & 1.48   & 44.86  & 1.65   & 45.23 \\
4 & 1.71   & 47.12  & 1.91   & 47.25 \\
5 & 2.06   & 48.84  & 2.23   & 49.34 \\
6 & 2.36   & 50.93  & 2.51   & 51.07 \\
7 & 2.78   & 54.69  & 2.97   & 54.93 \\
 \hline
\end{tabular}
\caption{Wall-clock time (in seconds) required for the matrix assembly and the entire simulation of the eigenvalue problem. The test problem is \eqref{eq:pdee} with $\overline\Omega = [0,1]^2$, which is discretized uniformly with $100 \times 100$ elements.}
 \label{tab:cp}
\end{table}

Table \ref{tab:cp} shows the computational time of the matrix assembly using Gauss and optimal rules and the total simulation time for the eigenvalue problem \eqref{eq:pdee} in two dimensions. Herein, the matrices are assembled using the tensor product structure with the corresponding one-dimensional matrices. The total time is dominated by the solution of the generalized matrix eigenvalue problem \eqref{eq:mevp} which is the most time-consuming part of the simulation. As Table \ref{tab:cp} shows, the total time required to solve the eigenvalue problem is similar for both Gauss and optimal rules. The cost of the assembly of the tensor product matrix does not increase much even when we evaluate two quadrature rules instead of one when using the optimally-blended rules. Moreover, nonstandard quadrature rules (with a minimal number of quadrature points), which are equivalent to the optimally-blended quadrature rules in the sense of producing the same mass and stiffness matrices, are developed in \cite{deng2018dispersion} and they reduce computational cost; see also the reduced rules in \cite{hiemstra2017optimal}.

\section{Concluding remarks} \label{sec:conclusion}
In this paper, we  derive blending schemes to optimize the dispersion errors of isogeometric analysis. 
These optimally-blended quadrature rules lead to two extra order of convergence in the eigenvalue errors. Utilizing this superconvergence result, we derive an asymptotically-exact a posteriori error estimator. The optimal blending schemes are established for $p$ up to order seven  for maximum continuity spaces. 
We will seek to generalize these results in the future work to arbitrary polynomial and variable continuity orders. Another future work would be the generalization of the optimal blending technique to numerical methods for the differential eigenvalue problem with interfaces.

\section{Acknowledgments} 
This publication was made possible in part by the CSIRO Professorial Chair in Computational Geoscience at Curtin University and the Deep Earth Imaging Enterprise Future Science Platforms of the Commonwealth Scientific Industrial Research Organisation, CSIRO, of Australia. Additional support was provided by the European Union's Horizon 2020 Research and Innovation Program of the Marie Sk{\l}odowska-Curie grant agreement No. 777778, the Mega-grant of the Russian Federation Government (N 14.Y26.31.0013), the Curtin Institute for Computation, and Institute for Geoscience Research (TIGeR). The authors thank Eric Chung's constructive comments on the analysis.

\section*{References}


\bibliography{igaref}

\appendix{}
\section{An a posteriori error estimator} \label{sec:ee}

We revisit the generalized Pythagorean eigenvalue Theorem \ref{thm:gpet} to derive an a posteriori error estimator for the eigenfunctions. We present the result in the form of the following corollary.
\begin{coro} \label{coro:ee}
For each discrete mode, we assume the normalization $\| u_j \|_{0,\Omega} = 1$ and $\tilde b_h( \tilde u_j^h, \tilde u_j^h) = 1$. For sufficiently small $h$, we have the following error estimator with lower and upper bounds for the eigenfunction errors in the energy norm
\begin{equation} 
(1 - \rho) R(\tilde \lambda_j^h, \tilde u^h_j) \leq \| u_j - \tilde u_j^h \|_E \leq (1 + \rho) R(\tilde \lambda_j^h, \tilde u^h_j),
\end{equation}
where $\rho = \mathcal{O}(h^2), 0<\rho<1,$ and the residual is defined as
\begin{equation}
R(\tilde \lambda_j^h, \tilde u^h_j) = \sqrt{ \big| a(\tilde u_j^h, \tilde u_j^h) - \tilde\lambda_j^h b(\tilde u_j^h, \tilde u_j^h) \big| }.
\end{equation}
\end{coro}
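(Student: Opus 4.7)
The plan is to verify that $R(\tilde \lambda_j^h, \tilde u^h_j)^2$ agrees with $\|u_j - \tilde u_j^h\|_E^2$ up to a relative error of order $h^2$, by using Theorem \ref{thm:gpet} as the algebraic identity and Theorems \ref{thm:eve} and \ref{thm:efe} to control the remainder. First I would introduce the quadrature consistency errors $A := a(\tilde u_j^h, \tilde u_j^h) - \tilde a_h(\tilde u_j^h, \tilde u_j^h)$ and $B := \tilde b_h(\tilde u_j^h, \tilde u_j^h) - b(\tilde u_j^h, \tilde u_j^h)$. Exploiting the normalization $\tilde b_h(\tilde u_j^h, \tilde u_j^h) = 1$ together with $\tilde a_h(\tilde u_j^h, \tilde u_j^h) = \tilde \lambda_j^h$ from \eqref{eq:vfho}, the residual compresses to $R^2 = |A + \tilde \lambda_j^h B|$, while \eqref{eq:gpet} reads $\|u_j - \tilde u_j^h\|_E^2 = (\tilde \lambda_j^h - \lambda_j) + \lambda_j \|u_j - \tilde u_j^h\|_{0,\Omega}^2 + A + \lambda_j B$.

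Next I would subtract these two expressions. For $h$ small enough that $A + \tilde \lambda_j^h B > 0$ (which follows from the nonnegativity of the energy error and the negligibility of the two superconvergent pieces in \eqref{eq:gpet}), the $A$ term cancels exactly and the $B$ terms combine, yielding
\[
\|u_j - \tilde u_j^h\|_E^2 - R^2 = (\tilde \lambda_j^h - \lambda_j)\, b(\tilde u_j^h, \tilde u_j^h) + \lambda_j \|u_j - \tilde u_j^h\|_{0,\Omega}^2.
\]
Each surviving term is superconvergent of order $h^{2p+2}$: Theorem \ref{thm:eve} supplies $|\tilde \lambda_j^h - \lambda_j| \le C h^{2p+2}$, and combining Theorem \ref{thm:efe} with the duality estimate of Lemma \ref{lem:inv} gives $\|u_j - \tilde u_j^h\|_{0,\Omega}^2 \le C h^{2p+2}$. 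Since $b(\tilde u_j^h, \tilde u_j^h)$ is uniformly bounded, this produces the absolute gap $\bigl|\|u_j - \tilde u_j^h\|_E^2 - R^2\bigr| \le C h^{2p+2}$.

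To convert this into the relative two-sided form stated, I would divide by $\|u_j - \tilde u_j^h\|_E^2$, which by Theorem \ref{thm:efe} is bounded above by $C h^{2p}$ and which also admits a matching lower bound $c h^{2p}$ coming from the nonvanishing leading coefficient of the dispersion expansion of Section \ref{sec:ddr}. Defining $\rho := \tilde C h^2$, for $h$ sufficiently small we have $\rho \in (0,1)$ and $(1-\rho)\|u_j - \tilde u_j^h\|_E^2 \le R^2 \le (1+\rho)\|u_j - \tilde u_j^h\|_E^2$. Taking square roots and absorbing the elementary factors $\sqrt{1 \pm \rho} = 1 \pm \mathcal{O}(\rho)$ into a relabeled $\rho = \mathcal{O}(h^2)$ delivers the two-sided estimate in the form claimed.

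The main obstacle I anticipate is the lower bound $\|u_j - \tilde u_j^h\|_E \ge c h^p$, which is not explicitly recorded in Theorem \ref{thm:efe}. It must be extracted from the fact that, by design, the blending rule eliminates the $(2p{+}1)$th-order dispersion term but deliberately leaves an $\mathcal{O}(h^{2p})$ quadrature consistency error in $A + \lambda_j B$; the explicit expansions of Section \ref{sec:ddr} make this sharpness transparent for the polynomial orders treated. Once that two-sided order is in hand, everything else is an algebraic manipulation of the generalized Pythagorean identity.
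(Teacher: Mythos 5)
Your proposal is correct and takes essentially the same route as the paper: the paper's own proof expands $\|u_j-\tilde u_j^h\|_E^2$ into exactly your identity $(\tilde\lambda_j^h-\lambda_j)\,b(\tilde u_j^h,\tilde u_j^h)+\lambda_j\|u_j-\tilde u_j^h\|_{0,\Omega}^2+R^2(\tilde\lambda_j^h,\tilde u_j^h)$ (it derives the generalized Pythagorean identity inline rather than citing Theorem \ref{thm:gpet}), bounds the superconvergent terms by $Ch^{2p+2}$ via Theorem \ref{thm:eve} and Lemma \ref{lem:inv}, and converts to the relative two-sided form using that the residual is of order $h^{2p}$. The only differences are bookkeeping: the paper normalizes by $R^2$ (asserting that $C_0=R^2/h^{2p}$ is a positive $h$-independent constant) while you normalize by the energy error, and the $ch^{2p}$ lower bound you flag as the ``main obstacle'' is precisely the step the paper also leaves unproved, so your treatment is, if anything, more explicit about the shared gap.
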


\begin{proof} \label{rem:ee}

Firstly, the energy norm is the $H^1$-seminorm, thus we also have
\begin{equation} \label{eq:efen}
\| u_j - \tilde u_j^h \|_{E} \le Ch^{p}.
\end{equation}
Also from Lemma \ref{lem:inv},  we obtain an estimation for eigenfunction errors in $L^2$-norm
\begin{equation} \label{eq:efl2}
\| u_j - \tilde u_j^h \|_{0,\Omega} \le Ch^{p+1}.
\end{equation}

Now the Corollary can be established by applying the superconvergence property of the eigenvalue errors derived in subsection \ref{sec:evana} to the generalized Pythagorean eigenvalue theorem. Alternatively, we derive this as follows. 
\begin{equation*}
\begin{aligned}
\| u_j - \tilde u_j^h \|_E^2 & = a(u_j - \tilde u_j^h, u_j - \tilde u_j^h) \\
& = a(u_j, u_j ) - 2 \lambda_j b(u_j, \tilde u_j^h) + a(\tilde u_j^h, \tilde u_j^h) \\
& = \lambda_j b(u_j, u_j ) - 2 \lambda_j b(u_j, \tilde u_j^h) + \lambda_j b(\tilde u_j^h, \tilde u_j^h) -  \lambda_j b(\tilde u_j^h, \tilde u_j^h) + a(\tilde u_j^h, \tilde u_j^h) \\
& = \lambda_j \| u_j - \tilde u_j^h \|^2_{0, \Omega} - \lambda_j b(\tilde u_j^h, \tilde u_j^h) + a(\tilde u_j^h, \tilde u_j^h) - \tilde\lambda_j^h b(\tilde u_j^h, \tilde u_j^h) + \tilde\lambda_j^h b(\tilde u_j^h, \tilde u_j^h) \\
& = \lambda_j \| u_j - \tilde u_j^h \|^2_{0, \Omega} + ( \tilde\lambda_j^h  - \lambda_j) \| \tilde u_j^h \|^2_{0, \Omega} + \tilde R(\tilde \lambda_j^h, \tilde u^h_j),
\end{aligned}
\end{equation*}
where $$\tilde R(\tilde \lambda_j^h, \tilde u^h_j) = a(\tilde u_j^h, \tilde u_j^h) - \tilde\lambda_j^h b(\tilde u_j^h, \tilde u_j^h).$$ 
Now, using Theorem \ref{thm:eve} and \eqref{eq:efl2}, we have 
\begin{equation}
\lambda_j \| u_j - \tilde u_j^h \|^2_{0, \Omega} + ( \tilde\lambda_j^h  - \lambda_j) \| \tilde u_j^h \|^2_{0, \Omega}  \leq C h^{2p+2},
\end{equation}
which is of higher order than $\| u_j - \tilde u_j^h \|_E^2$ (which is of order $h^{2p}$), that is,
\begin{equation} \label{eq:eer}
\| u_j - \tilde u_j^h \|_E^2 = \mathcal{O}(h^{2p+2}) + \tilde R(\tilde \lambda_j^h, \tilde u^h_j).
\end{equation}
Clearly, for sufficiently small $h$, $\tilde R(\tilde \lambda_j^h, \tilde u^h_j) \ge 0.$ This allows us to write 
$$
\tilde R(\tilde \lambda_j^h, \tilde u^h_j) = R^2(\tilde \lambda_j^h, \tilde u^h_j).
$$
The error is dominated by the residual $R(\tilde \lambda_j^h, \tilde u^h_j)$ which is of order $h^{2p}$ in the view of \eqref{eq:al}, i.e., $R(\tilde \lambda_j^h, \tilde u^h_j) / h^{2p} = \mathcal{O}(1)$. For sufficiently small $h$, one can rewrite \eqref{eq:eer} as 
\begin{equation} 
-C h^{2p+2} + R^2(\tilde \lambda_j^h, \tilde u^h_j) \le \| u_j - \tilde u_j^h \|_E^2 \le C h^{2p+2} + R^2(\tilde \lambda_j^h, \tilde u^h_j),
\end{equation}
where $C$ is a positive constant independent of $h$. This further leads to
\begin{equation} \label{eq:417}
(1 - C h^2/ C_0 ) R^2(\tilde \lambda_j^h, \tilde u^h_j) \le \| u_j - \tilde u_j^h \|_E^2 \le (1 - C h^2/ C_0 ) R^2(\tilde \lambda_j^h, \tilde u^h_j),
\end{equation}
for an $h$-independent positive constant $C_0 = R^2(\tilde \lambda_j^h, \tilde u^h_j) / h^{2p}$.
Hence, taking square root of \eqref{eq:417} and using Taylor expansion to arrive to $\rho \approx C h^2/ (2C_0) = \mathcal{O}(h^2)$ yield the desired result.
\end{proof}

\begin{remark} \label{rem:apee}
This a posteriori error estimator strongly relies on the superconvergence result on uniform meshes. This estimator shares the feature of the asymptotically exact estimators. As a consequence of Corollary \ref{coro:ee}, we expect that the effectivity index, which is defined as
\begin{equation} \label{eq:ei}
EI = \frac{R(\tilde \lambda_j^h, \tilde u^h_j)}{\| u_j - \tilde u_j^h \|_E},
\end{equation}
converges to 1 at rate $\rho = \mathcal{O}(h^2)$. 
 We refer to \cite{cances2017guaranteed} for more reliable and robust a posteriori  error estimators. 
\end{remark}

\end{document}